\documentclass[12pt]{amsart} 

\usepackage[margin=2.9cm]{geometry}             
\usepackage{color}
\usepackage{graphicx}
\usepackage{amssymb, enumerate,bm}
\usepackage[matrix,arrow,ps]{xy}
\usepackage{epstopdf, amsmath}
\usepackage{paralist}
\DeclareGraphicsRule{.tif}{png}{.png}{`convert #1 `dirname #1`/`basename #1 .tif`.png}

\newcommand{\C}{\mathbb C}

\newcommand{\R}{\mathbb R}
\newcommand{\B}{\mathbb B}
\newcommand{\transp}{\,^t}

\newcommand{\aaa}{\mathfrak{(a)}}
\newcommand{\fff}{\mathfrak{(f)}}
\newcommand{\bbb}{\mathfrak{(b)}}
\newcommand{\ttt}{\mathfrak{(t)}}

\newcommand{\vnorm}[1]{{\| #1 \|}}

\DeclareMathOperator{\spanc}{span}
\newtheorem{theo}{Theorem}[section]
\newtheorem{lemma}[theo]{Lemma}
\newtheorem{cor}[theo]{Corollary}
\newtheorem{prop}[theo]{Proposition}

\theoremstyle{remark}
\newtheorem{remark}[theo]{Remark}
\theoremstyle{example}
\newtheorem{example}[theo]{Example}
\theoremstyle{definition}
\newtheorem{defi}[theo]{Definition}
\numberwithin{equation}{section}

\begin{document}

\begin{abstract} Let $M\subset \C^{N}$ be a generic real submanifold of class $\mathcal{C}^4$. 
In case $M$ is Levi non-degenerate in the sense Tumanov, we construct stationary discs for 
$M$. If furthermore $M$ satisfies an additional non-degeneracy condition, we apply the method of stationary discs to obtain $2$-jet determination of CR automorphisms of $M$.    

\end{abstract}

\author{Florian Bertrand, L\'ea Blanc-Centi and Francine Meylan}
\title[Stationary discs and finite jet determination]{Stationary discs and finite jet determination for non-degenerate generic real submanifolds}

\subjclass[2010]{}

\keywords{}

\maketitle 


\section*{Introduction}
An important part  of the theory of functions of several complex variables going back to the pioneering work  of Poincar\'e \cite{Po},
is the understanding of biholomorphic equivalence of domains.
In dimension $N=1$, the 
Riemann mapping theorem
asserts that domains have only topological invariants.
A similar statement  fails in higher dimension, 
which is  connected with the fact
that smooth boundaries of domains in $\C^N$, $N\ge2$,
possess infinitely many local biholomorphic invariants.

The simplest 
test 
case is the real hyperquadric $Q$ given in 
$\C^N$ by the  equation
$$\Re e  w = \sum_{j=1}^{N-1} \pm |z_j|^2, \quad (z,w)\in \C^{N-1}\times \C.$$
From the infinite-dimensional family of all local biholomorphic self-maps of $\C^N$, only a finite-dimensional subfamily preserves $Q$.
That family, known as the  {\em (biholomorphic) automorphism group} of $Q$ plays
 a fundamental role in understanding automorphism groups of more general real hypersurfaces 
satisfying a non-degeneracy condition called {\em Levi non-degeneracy}. For a real submanifold $M\subset \C^N$ and a point $p \in M$, we denote by $Aut(M,p)$ the set of germs of biholomorphic maps  fixing $p$ and such that $F(M)\subset M$. 
We recall here the classical statement contained in the work of Chern and Moser \cite{ch-mo}.
\begin{theo}\cite{ch-mo}\label{emma}
If a  real hypersurface $M \subset \C^N $ is real analytic Levi non-degenerate at a point $p\in M,$ then elements of  $Aut(M,p)$ are uniquely determined by their  jets of order two at $p.$
\end{theo}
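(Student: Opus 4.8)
The plan is to reduce the statement to two classical facts about the model quadric $Q$: the essential uniqueness of the Chern--Moser normal form, and the injectivity of the $2$-jet map on the isotropy group $Aut(Q,0)$.

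First I would normalize. Taking $p=0$ and using that $M$ is real analytic and Levi non-degenerate, I invoke the Chern--Moser normalization: there is a \emph{convergent} biholomorphism $\Phi$ with $\Phi(0)=0$ carrying $M$ onto a hypersurface in normal form
$$\Ree w=\langle z,z\rangle+\sum_{k,l\ge 2}N_{k,l}(z,\bar z,\Imm w),$$
where $\langle z,z\rangle=\sum_{j}\pm|z_j|^2$ is the Levi form, each $N_{k,l}$ is of bidegree $(k,l)$ in $(z,\bar z)$, and the trace conditions $\mathrm{tr}\,N_{2,2}=\mathrm{tr}^2N_{2,3}=\mathrm{tr}^3N_{3,3}=0$ hold. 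Conjugation by $\Phi$ carries $Aut(M,0)$ isomorphically onto the automorphism group of $\Phi(M)$ and transforms $2$-jets by a fixed invertible rule, so it suffices to treat the case where $M$ is already in normal form.

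I would then record the structure of the model. The isotropy group $Aut(Q,0)$ consists of explicit fractional--linear maps depending on parameters $(U,\lambda,a,r)$, with $U$ pseudo-unitary for $\langle\cdot,\cdot\rangle$, $\lambda>0$, $a\in\C^{N-1}$ and $r\in\R$; the triple $(U,\lambda,a)$ is already visible in the first-order part of such a map and $r$ in the second-order part, so the $2$-jet map $Aut(Q,0)\to J^2_0$ is injective. Given $F_1,F_2\in Aut(M,0)$ with $j^2_0F_1=j^2_0F_2$, I set $G=F_2^{-1}\circ F_1\in Aut(M,0)$, so that $j^2_0 G=\mathrm{id}$, and the goal becomes to prove $G=\mathrm{id}$.

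The crux, and the step I expect to be the main obstacle, is the uniqueness half of the Chern--Moser theorem: a local CR automorphism $G=(f,g)$ of a normal-form hypersurface fixing $0$ is uniquely determined by its $2$-jet at $0$. I would establish this by substituting the Taylor expansion of $G$ into the invariance relation $G(M)\subset M$ and solving order by order in the Chern--Moser weighting ($z_j$ of weight $1$, $w$ of weight $2$). Levi non-degeneracy makes the linear operator governing each step surjective onto the normalizable terms, with kernel equal to the finite-dimensional isotropy algebra $\mathfrak{aut}(Q,0)$, while the trace normalization of the $N_{k,l}$ removes the residual ambiguity; hence every higher-order coefficient of $G$ is determined by the lower-order ones once the isotropy data---which is exactly $j^2_0 G$---has been fixed. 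Applying this to our $G$, for which $j^2_0 G=\mathrm{id}$, yields $G=\mathrm{id}$, whence $F_1=F_2$. The genuinely delicate point underlying the argument is the convergence and essential uniqueness of the normalization, which is precisely what guarantees that the finite-dimensional isotropy data is captured by the second-order jet and that no higher order is needed.
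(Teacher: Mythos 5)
Theorem \ref{emma} is stated in the paper as a quotation of the classical Chern--Moser result \cite{ch-mo} and is given no proof in the text, so your attempt can only be measured against the classical argument of the cited source --- and your sketch reproduces it faithfully: convergent normalization to the trace-normalized normal form, the explicit parametrization of $Aut(Q,0)$ by $(U,\lambda,a,r)$ read off from the $2$-jet, and the uniqueness half obtained by solving $G(M)\subset M$ order by order in the Chern--Moser weighting, where the relevant linear operator has kernel $\mathfrak{aut}(Q,0)$ concentrated in weights $\le 2$ and image complementary to the normal-form space, so that $j^2_0G=\mathrm{id}$ forces $G=\mathrm{id}$ inductively (your bookkeeping is sound here: all monomials of weight $\le 2$ have Taylor order $\le 2$, so a trivial $2$-jet does kill the low-weight components, and analyticity upgrades the formal conclusion to $G=\mathrm{id}$). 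One remark worth recording: the paper's own machinery yields an independent proof of a stronger statement, since for $d=1$ the fully non-degeneracy condition of Definition \ref{deffully} amounts to invertibility of the Levi matrix $A_1$, so Theorem \ref{chloe} gives $2$-jet determination already for $\mathcal{C}^4$ Levi non-degenerate hypersurfaces via stationary discs, with no real-analyticity hypothesis --- a genuinely different route from the normal-form argument you outline, trading the order-by-order algebra for the Riemann--Hilbert construction and the jet-injectivity of the disc family.
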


Let us remark that finite jet determination  for  holomorphic mappings between { \it real analytic} real  hypersurfaces has attracted  considerable attention these past years. For a survey, we refer for instance to the  articles of Zaitsev \cite{za1} or  Baouendi, Ebenfelt  and Rothschild \cite{BER2}. We also point out the works of Ebenfelt \cite{eb}, Ebenfelt and Lamel \cite{eb-la}, Kolar, Zaitsev and the third author \cite{KMZ1} in case of $\mathcal{C}^\infty$  real hypersurfaces. Finally the case of finitely smooth real hypersurfaces has been studied in \cite{be-bl, be-de1, be-de-la}.

The aim of this paper is to study  the finite jet determination problem for finitely smooth real submanifolds $M \subset  \C^N $  of codimension $d>1$ using the method developed by the first two authors in \cite{be-bl}. The present work is  motivated in particular  by the works of Beloshapka \cite{be2} and of the second  and third  authors \cite{bl-me2}. More precisely, according to Beloshapka \cite{be2}, if  $M\subset \C^N $ is real analytic  generic submanifold of codimension $d$ and Levi non-degenerate  at $p\in M$ in the sense of Beloshapka  then elements of  $Aut(M,p)$ are uniquely determined by their  jets of order two at $p.$
We emphasize  that various not equivalent  definitions  for the generalization to real submanifolds $M \subset \C^N $  of codimension $d>1$ of the notion  of  Levi non-degeneracy  condition for  a hypersurface   have  been introduced  in the literature  \cite {ba-ja-tr}, \cite{be2}, \cite{tu}. See also \cite {bl-me1} for a survey of these different notions.

Important work on finite jet determination for holomorphic mappings between real submanifolds of higher codimension has also 
been done; we refer  for instance to the articles of Zaitsev \cite{za}, Baouendi, Ebenfelt and Rotschild \cite{BER1}, Baouendi, Mir and Rotschild \cite{BMR}, Lamel and Mir 
\cite{la-mi}, Juhlin \cite{ju}, Juhlin and Lamel \cite{ju-la} for the real analytic case and Kim and Zaitsev \cite{ki-za} for the $
\mathcal{C}^\infty$ case.
We now state our  main result.
\begin{theo}\label{chloe}  
Let $M\subset \C^{N}$ be a $\mathcal{C}^4$ generic  real submanifold.    
Assume that $M$ is  fully non-degenerate at $p \in M.$  Then any germ  at $p$ of  CR automorphism   of $M$ of class $\mathcal{C}^3$   is  uniquely determined by its jet of order two at $p.$     
\end{theo}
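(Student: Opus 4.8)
The plan is to use the method of stationary discs to linearize the action of the CR automorphism group. The key idea, following the approach of Bertrand and Blanc-Centi in \cite{be-bl}, is that CR automorphisms fixing $p$ must permute the family of stationary discs attached to $M$ passing through a prescribed boundary configuration. Since we are assuming $M$ is fully non-degenerate (and in particular Levi non-degenerate in the sense of Tumanov, so that the construction of stationary discs referred to in the abstract applies), we obtain a sufficiently rich family of such discs parametrized by their boundary data and lifted data in the cotangent bundle. A CR automorphism $F$ of class $\mathcal{C}^3$ fixing $p$ acts on this family, and the induced action is determined by how $F$ transforms the relevant jet data along the discs emanating from $p$.

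The concrete steps I would carry out are as follows. First, I would recall the precise construction of stationary discs for $M$ at $p$ provided earlier in the paper under the Levi non-degeneracy hypothesis, and identify the finite-dimensional parameter space describing those discs that are tangent to the appropriate directions at the distinguished boundary point mapping to $p$. Second, I would show that any $F\in Aut(M,p)$ of class $\mathcal{C}^3$ maps stationary discs to stationary discs: this is the invariance of the stationary condition under biholomorphisms, transported to the boundary via the Bishop-type equation and the accompanying cotangent lift. Third, using this invariance, I would set up a map that sends the $2$-jet of $F$ at $p$ to the collection of induced transformations on the boundary evaluations and first-order data of the model stationary discs. The heart of the argument is then to prove that this correspondence is \emph{injective}: if two automorphisms $F_1,F_2$ share the same $2$-jet at $p$, then they act identically on the stationary disc family, and since the evaluations of these discs fill out a full neighborhood of $p$ in $M$, one concludes $F_1=F_2$ near $p$.

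The main technical obstacle I anticipate is establishing that the $2$-jet at $p$ genuinely determines the full action on the disc family, rather than just its leading-order behavior. This is precisely where the \emph{full} non-degeneracy condition (beyond mere Levi non-degeneracy) must enter: it should guarantee that the derivatives of $F$ along the stationary discs, together with the cotangent lift, are controlled by finitely many jet coordinates at $p$, and that no higher-order ambiguity survives. Concretely, I expect to need a careful analysis of the linearized problem — differentiating the invariance relation along the disc parameters at the central disc — to show that the full non-degeneracy makes the resulting linear system of maximal rank, so that the second-order jet data suffices. A secondary difficulty, owing to the merely finite smoothness ($\mathcal{C}^4$ for $M$ and $\mathcal{C}^3$ for $F$), is that one cannot invoke power-series or formal-jet arguments as in the real-analytic setting; instead the injectivity must be obtained through the implicit function theorem applied to the Bishop equation with the correct loss-of-derivatives bookkeeping, which is exactly what the stationary disc method of \cite{be-bl} is designed to handle.
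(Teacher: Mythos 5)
Your overall strategy is indeed the paper's strategy (invariance of stationary discs under CR automorphisms, a rich family of discs through the reference boundary point, centers filling an open set, and determination of discs by low-order jet data), but as a blueprint it has one concretely missing ingredient and one step that, as you describe it, would not go through. The missing ingredient is the rescaling. The disc family is \emph{not} constructed ``for $M$ at $p$'': Theorems \ref{theodiscs} and \ref{theodiscscons} produce lifts of stationary discs only for defining functions $r$ in a small $\mathcal{C}^4(\B)$-neighborhood $U$ of the model quadric defining function $\rho$, via the implicit function theorem applied to the nonlinear Riemann--Hilbert problem at a carefully chosen initial disc $\bm{f_0}$ (built from a vector $V$ given by condition $\fff$ and constants $c_j$ given by $\ttt$). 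A fixed $M$ with $O(3)$ remainder does not lie in $U$. The paper bridges this with the anisotropic dilations $\Lambda_t(z,w)=(tz,t^2w)$: one has $r_t=\frac{1}{t^2}\,r\circ\Lambda_t\in U$ for small $t$, and --- equally essential --- the conjugated automorphism $F_t=\Lambda_t^{-1}\circ F\circ\Lambda_t$ of a trivial-$2$-jet automorphism satisfies the cubic estimate of Lemma \ref{lemdil}, which is what keeps $(F_t)_*\bm{f}$ within the $\varepsilon$-neighborhood of $\bm{f_0}$ where the uniqueness statements apply. Your plan never addresses this localization, and without it the family of discs you invoke simply does not exist for $M$ itself.

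The second issue is your proposed mechanism for why the $2$-jet controls the action: ``differentiating the invariance relation along the disc parameters'' to get a linear system of maximal rank. The paper does nothing of the sort, and it is unclear how that analysis would be carried out, since it entangles the unknown automorphism with the disc family. Instead, the paper proves two properties of the disc family \emph{alone}, independent of any automorphism: the centers $f(0)$ fill an open set of $\C^{n+d}$ (Proposition \ref{propcenteropen}; note these are interior points filling an ambient open set, not, as you write, a neighborhood of $p$ in $M$), and the lifts are determined by their $1$-jet at $\zeta=1$ (Proposition \ref{propjetdiscs}). Both rest on condition $\fff$ through the positive definiteness of $\transp\overline{D_2}D_2$ in the explicit kernel computation of the linearized Riemann--Hilbert operator $2\Re e\,[\overline{G(\zeta)}\,\cdot\,]$. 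The $2$-jet hypothesis then enters only through the elementary observation that if $F$ has trivial $2$-jet at $0$, then $(F_t)_*\bm{f}$ and $\bm{f}$ share the same $1$-jet at $\zeta=1$ (because $f(1)=0$, $d_0F_t=\mathrm{Id}$, and the second derivatives of $F_t$ vanish at $0$), whence $(F_t)_*\bm{f}=\bm{f}$ by Proposition \ref{propjetdiscs}, so $F_t$ fixes every center and hence equals the identity on an open set. To repair your proposal you should replace the maximal-rank linearization of the $F$-action by these two $F$-independent propositions, and add the dilation argument of Lemma \ref{lemdil}; with those in place your outline becomes the paper's proof.
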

We refer to Definition \ref{deffully} for the notion of  fully non-degenerate submanifold; this notion is closely related to 
the non-degeneracy condition introduced by Tumanov in \cite{tu}. Moreover, 
 in case $M\subset \C^{4}$ the fully non-degeneracy condition is equivalent to the non-degeneracy condition in the sense of Beloshapka, and thus we have
\begin{cor}\label{coro}
Let $M\subset \C^{4}$ be a $\mathcal{C}^4$ generic  real submanifold.    
 Assume that $M$ is  Levi non-degenerate  at $p \in M$  in the sense of Beloshapka. 
 Then any germ  at $p$ of  CR automorphism   of $M$ of class $\mathcal{C}^3$ is uniquely determined by its  jet of order two at $p.$  
\end{cor}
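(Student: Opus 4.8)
The plan is to deduce Corollary \ref{coro} directly from Theorem \ref{chloe}: once we know that, for a generic $M \subset \C^{4}$, Beloshapka's Levi non-degeneracy at $p$ is equivalent to the full non-degeneracy of Definition \ref{deffully}, the conclusion is immediate. Thus essentially the entire content of the corollary lies in verifying this equivalence in the specific ambient dimension $N = 4$, and this is where I would concentrate the work; the final invocation of the main theorem is purely formal.

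First I would reduce both conditions to statements about the vector-valued Levi form $\mathcal{L}\colon T^{1,0}_p M \times T^{1,0}_p M \to \C^{d}$ at $p$, where $d$ is the real codimension of $M$. Since $N = 4$ and $M$ is generic, its CR dimension is $n = 4 - d$, so $n + d = 4$; the cases relevant to the higher-codimension theory are $d = 2$ (whence $n = 2$) and $d = 3$ (whence $n = 1$). In each case I would write out Beloshapka's condition, namely the non-existence of a common null direction for the components of $\mathcal{L}$ together with linear independence of those components over the normal space, and then the full non-degeneracy condition of Definition \ref{deffully}, and compare the two requirements directly as conditions on $\mathcal{L}$.

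The main step, and the step I expect to be the genuine obstacle, is to show that these two sets of requirements coincide precisely when $N = 4$. I anticipate that full non-degeneracy is a priori stronger than Beloshapka's condition in general codimension, so the nontrivial implication to establish is that Beloshapka non-degeneracy forces full non-degeneracy here. The argument should be a finite-dimensional linear-algebra computation: assuming that $\mathcal{L}$ has no common null vector and that its components span the normal directions, I would show that the linear maps entering Definition \ref{deffully} attain maximal rank, exploiting the tight numerical constraint $n + d = 4$. The smallness of the dimension is exactly what forces the ranks to match; for larger $N$ there is enough room for Beloshapka non-degeneracy to hold while full non-degeneracy fails, which is why the equivalence is special to $\C^{4}$.

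Having established the equivalence, the proof concludes at once: under the hypothesis that $M \subset \C^{4}$ is Levi non-degenerate at $p$ in the sense of Beloshapka, $M$ is fully non-degenerate at $p$, so Theorem \ref{chloe} applies and every germ at $p$ of a $\mathcal{C}^3$ CR automorphism of $M$ is uniquely determined by its jet of order two at $p$, which is precisely the assertion of the corollary.
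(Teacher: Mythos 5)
Your proposal takes essentially the same route as the paper: the paper derives Corollary \ref{coro} precisely by noting that for $M\subset\C^{4}$ the fully non-degeneracy condition of Definition \ref{deffully} coincides with Beloshapka's Levi non-degeneracy (a linear-algebra check it records rather than details, remarking that $\fff$ and $\aaa$ are equivalent when $d=2$, while for $d=3$, $n=1$ both conditions fail vacuously) and then invoking Theorem \ref{chloe}. Your reduction to the dimension count $n+d=4$ and the purely formal final appeal to the main theorem match the paper's argument.
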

As a direct consequence of Theorem \ref{chloe}, we also have the following corollary.
\begin{cor}\label{zoe}
Let $M\subset \C^{N}$ be a $\mathcal{C}^4$ generic  real submanifold.   
 Assume that $M$ is  fully non-degenerate at $p \in M.$  Then elements of  $Aut(M,p)$  are uniquely determined by its  jet of order two at $p.$  
\end{cor}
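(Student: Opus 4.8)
The plan is to deduce Corollary~\ref{zoe} directly from Theorem~\ref{chloe}, the only work being to relate biholomorphic automorphisms to CR automorphisms of class $\mathcal{C}^3$ and to recover a holomorphic germ from its restriction to $M$. First I would unwind the definition: an element $F\in Aut(M,p)$ is a germ at $p$ of biholomorphism with $F(p)=p$ and $F(M)\subset M$. Since $F$ is holomorphic it is in particular real-analytic, hence of class $\mathcal{C}^3$, and $f:=F|_M$ is a CR map. As $F$ is a local biholomorphism and $M$ is a $\mathcal{C}^4$ submanifold, $F$ carries a neighborhood of $p$ in $M$ onto a submanifold of the same dimension contained in $M$; by invariance of domain this image is open in $M$, so $f$ is a germ of CR diffeomorphism, that is, a CR automorphism of $M$ of class $\mathcal{C}^3$. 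Thus Theorem~\ref{chloe} applies to $f$.

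Next I would carry out the jet comparison. Let $F,G\in Aut(M,p)$ have the same $2$-jet at $p$, i.e.\ the same Taylor polynomial of order two. Restricting equal holomorphic $2$-jets to $M$ shows that the CR automorphisms $f=F|_M$ and $g=G|_M$ of class $\mathcal{C}^3$ have the same $2$-jet at $p$. Theorem~\ref{chloe} then yields $f=g$ as germs of CR maps, so $F$ and $G$ agree on $M$ in a neighborhood of $p$.

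Finally I would invoke genericity to pass from $M$ back to $\C^N$. Since $M$ is a generic submanifold, its tangent spaces satisfy $T_qM+i\,T_qM=\C^N$, so $M$ is a set of uniqueness for holomorphic functions: any holomorphic germ vanishing on $M$ vanishes identically (its zero set would otherwise be a complex hypersurface containing the generic $M$, which is impossible). Applying this to each component of $F-G$, the equality $F|_M=G|_M$ forces $F=G$ as germs of biholomorphisms, which is the desired $2$-jet determination.

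The content of this corollary is entirely formal once Theorem~\ref{chloe} is in hand, so there is no serious obstacle. The only point requiring a little care is the last step: one must ensure that coincidence of the restrictions $F|_M=G|_M$ genuinely propagates to $F=G$, which is exactly where genericity of $M$ enters, and one should check at the outset that the restriction of an element of $Aut(M,p)$ is indeed a CR automorphism of class $\mathcal{C}^3$ so that Theorem~\ref{chloe} is applicable.
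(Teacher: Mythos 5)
Your proof is correct and matches the paper's intent: the paper states Corollary~\ref{zoe} as a direct consequence of Theorem~\ref{chloe} without further argument, and your write-up simply supplies the routine details (holomorphic germs restrict to $\mathcal{C}^3$ CR automorphisms, and a generic submanifold is a uniqueness set for holomorphic germs, so $F|_M=G|_M$ forces $F=G$). The only cosmetic point is that the standard justification of the uniqueness-set fact is by induction on derivatives using $T_qM+iT_qM=\C^N$, which is cleaner than the hypersurface-containment heuristic you sketch, but this does not affect correctness.
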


Our approach is based on the study of an important family of invariant objects attached to real submanifolds, namely the stationary 
discs. These particular holomorphic discs were first introduced by Lempert in \cite{le} as  extremal discs for the Kobayashi 
metric of bounded smooth strongly convex domains in $\C^n$. 
The description of such discs and their applications were later on developed in more general settings such as strictly pseudoconvex hypersurfaces by Huang \cite{hu} and Pang \cite{pa} or in higher codimension by Tumanov \cite{tu}; see also the work of Sukhov and Tumanov \cite{su-tu} who construct stationary discs attached to small perturbations of $\mathbb{S}^3\times \mathbb{S}^3 \subset \C^4$, where $\mathbb{S}^3$ denotes the unit sphere in $\C^2$. Recently, the method of stationary discs has been particularly adapted to the study of jet determination problems for finitely smooth real hypersurfaces \cite{be-bl,be-de1,be-de-la}. 
In order to prove Theorem \ref{chloe}, we construct stationary discs attached to $M$; the idea of attaching such a disc to $M$ is a boundary value problem, namely a nonlinear Riemann-Hilbert type problem. Inspired by the work of Forstneri\v{c} \cite{fo} and of Globevnik \cite{gl1,gl2} on analytic  discs  attached  to  totally  real submanifolds, we analyze the existence and the structure of solutions of such nonlinear Riemann-Hilbert type problems (see Theorem \ref{theodiscs} and its Corollary \ref{cordiscs}). In order to study geometric properties of stationary discs, it is essential for the method developed in \cite{be-bl} to have the description of all  stationary discs attached to the model hyperquadric. Our approach, instead, is based on the choice of a "good" initial stationary disc and is therefore more subtle. Finally we emphasize that the fully non-degeneracy condition imposed on $M$ is natural in our context since it ensures that one  can construct  "enough" stationary discs with "good" geometric properties.

\

The paper is organized as follows. In  section $1,$  we introduce and discuss the notion of fully non-degeneracy condition  for a  generic  real submanifold  in the light of the various conditions for  non-degenerate real submanifolds introduced by Beloshapka and Tumanov. We also introduce the spaces of functions needed for the rest of the sequel and recall  the definition  and properties of stationary discs. In  section $2,$ we first construct an initial stationary disc for the the quadric part of the generic real submanifold that will enable us to construct  a  "big enough" family of stationary discs. In section $3,$ we discuss  crucial  geometric properties of this  family  that will allow us to prove  Theorem \ref{chloe} in section $4$.

\section{Preliminaries}

We denote by $\Delta$ the unit disc in $\C$, by $\partial \Delta$ its boundary,  and by $\B\subset \C^N$ the unit ball in $\C^N$.

\subsection{Non-degenerate generic real submanifolds} 
Let $M \subset \C^{N}$ be a  $\mathcal{C}^{2}$ generic   real submanifold of real codimension $d\ge 1$ through $p$. 
Under these hypotheses, and after a local biholomorphic change of coordinates, we may assume that $p=0$ and that $M\subset \C^N=\C^n_z\times\C^d_w$ is given locally by the following system of equations
\begin{equation}\label{eqred}
\begin{cases}
r_1= \Re e  w_1- \transp\bar z A_1 z+ O(3)=0\\
\ \ \ \ \vdots \\
r_d=\Re e  w_d- \transp\bar z A_d z+ O(3)=0
\end{cases}
\end{equation}
where $A_1,\hdots,A_d$ are Hermitian matrices of size $n$
(see \cite{ber} and Section 7.2  \cite{bo} for more details). In the remainder O(3), $z$ is of weight one and $\Im m w$ of weight two. We set $r=(r_1,\ldots,r_d)$.

We  recall the following definition introduced by Beloshapka in \cite{be1,be2}. 
\begin{defi} 
A $\mathcal{C}^{2}$ generic  real submanifold $M$ of $\C^N$ of codimension $d$ and given by (\ref{eqred}) is  \emph{Levi non-degenerate} at 0 in the sense of Beloshapka if the following two conditions are both satisfied
\begin{center}
\begin{tabular}{cl}
$\aaa$ & $A_1$,...,$A_d$ are linearly independent (equivalently on $\R$ or $\C$)\\
\\
$\bbb$ & $\bigcap_{j=1}^d\mathrm{Ker}A_j=\{0\}$
\end{tabular}
\end{center}
\end{defi}
The Levi non-degeneracy is a biholomorphic invariant notion (see for instance \cite{bl-me1}). Note that the non-degeneracy condition introduced in \cite{za,ber} is exactly condition $\bbb$. 
In our context, we need to work with a  stronger notion of non-degeneracy. 
\begin{defi}\label{deffully} 
A  $\mathcal{C}^{2}$ generic real submanifold $M$ of $\C^{N}$ of codimension $d$ given by Equations (\ref{eqred}) is \emph{fully non-degenerate} at 0 if the following two conditions are both satisfied    
\begin{center}
\begin{tabular}{cl}
$\fff$ & there exists $V \in \C^{n}$ such that ${\rm span}_{\C}\{A_1V,\ldots,A_dV\}$ is of dimension $d$,\\
\\
$\ttt$ & if there exists a real linear combination $\sum_{j=1}^d c_jA_j$ that is invertible.
\end{tabular}
\end{center}
\end{defi}
Note that $\fff$ implies that $d \leq n$ and also implies $\aaa$. We  point out that in case $d=2$  it can be checked that   $\fff$ and $\aaa$ are equivalent, but in general they are not equivalent as illustrated by the following example.
\begin{example} The quadric in $\C^8$ given by 
$$\begin{cases}
 \Re  e  w_1= |z_1|^2+|z_2|^2+|z_3|^2+|z_4|^2\\
 \Re  e  w_2= |z_1|^2\\
 \Re  e  w_3= |z_2|^2\\
 \Re  e  w_4= |z_1|^2+\Re e (\overline{z_1}z_2)\\
\end{cases}$$
is Levi non-degenerate at the origin but does not satisfy $\fff$. 
\end{example}  
Condition $\fff$ is tightly related to the existence of analytic discs whose centers fill an open set (see Proposition \ref{propcenteropen}) and determined by their $1$-jet (see Proposition \ref{propjetdiscs}).  Recall that condition $\ttt$ was introduced by Tumanov \cite{tu} and is essential for the construction of stationary disc (see Theorem \ref{theodiscs}); for this reason, we say that {\it $M$ is Levi non-degenerate at $0$ 
 in the sense of Tumanov}  if it satisfies conditon $\ttt$. Condition $\ttt$ implies $\bbb$ but not $\aaa$, except in the hypersurface case, since one can choose $A_1$ invertible and $A_2=\hdots=A_d=0$. Moreover, as pointed out in \cite{bl-me1}, the Levi non-degeneracy in the sense of Beloshapka does not necessarily imply $\ttt$. Finally we note that in case $M\subset \C^4$, the fully non-degeneracy condition and the Levi non-degeneracy in the sense of Beloshapka coincide.  

\subsection{Stationary discs}
We first introduce the spaces of functions we need. 
Let $k \geq 0$ be an integer and let $0< \alpha<1$.
We denote by $\mathcal C^{k,\alpha}=\mathcal C^{k,\alpha}(\partial\Delta,\R)$ the space of real-valued functions  defined on $\partial\Delta$ of class 
$\mathcal{C}^{k,\alpha}$. The space  $\mathcal C^{k,\alpha}$ is endowed with  its usual norm
$$\|f\|_{\mathcal{C}^{k,\alpha}}=\sum_{j=0}^{k}\|f^{(j)}\|_\infty+
\underset{\zeta\not=\eta\in \partial\Delta}{\mathrm{sup}}\frac{\|f^{(k)}(\zeta)-f^{(k)}(\eta)\|}{|\zeta-\eta|^\alpha},$$
where $\|f^{(j)}\|_\infty=\underset{\partial\Delta}{\mathrm{max}}\|f^{(j)}\|$.
We set $\mathcal C_\C^{k,\alpha} = \mathcal C^{k,\alpha} + i\mathcal C^{k,\alpha}$. The space $\mathcal C_\C^{k,\alpha}$ is equipped with the norm
$$\|f\|_{\mathcal{C}_{\C}^{k,\alpha}}=
\|\Re e  f\|_{\mathcal{C}^{k,\alpha}}+\|\Im m f\|_{\mathcal{C}^{k,\alpha}}$$ 
We denote by $\mathcal A^{k,\alpha}$ the subspace of {\it analytic discs} in $\mathcal C_{\C}^{k,\alpha}$ consisting of functions $f:\overline{\Delta}\rightarrow \C$, holomorphic on $\Delta$ with trace on 
$\partial\Delta$ belonging to $\mathcal C_\C^{k,\alpha}$.

 Let $M$ be a $\mathcal{C}^{2}$ generic  real submanifold of $\C^N$ of codimension $d$ given by  (\ref{eqred}). An analytic disc $f \in (\mathcal A^{k,\alpha})^{N}$ 
is {\it attached to  $M$} if $f(\partial \Delta) \subset M$. Following Lempert \cite{le} and Tumanov \cite{tu} we define
\begin{defi}
A holomorphic disc $f: \Delta \to \C^N$ continuous up to  $\partial \Delta$ and attached to  $M$ is {\it stationary for $M$} if there 
exists a  holomorphic lift $\bm{f}=(f,\tilde{f})$ of $f$ to the cotangent bundle $T^*\C^{N}$, continuous up to 
 $\partial \Delta$ and such that for all $\zeta \in \partial\Delta,\ \bm{f}(\zeta)\in\mathcal{N}M(\zeta)$
where
\begin{equation}\label{eqcon}
\mathcal{N}M(\zeta):=\{(z,w,\tilde{z},\tilde{w}) \in T^*\C^{N} \ | \ (z,w) \in M, (\tilde{z},\tilde{w}) \in 
\zeta N^*_{(z,w)} M\setminus \{0\} \},
\end{equation}
and where 
$$N^*_{(z,w)} M=\spanc_{\R}\{\partial r_1(z,w), \ldots, \partial r_d(z,w)\}$$ is the conormal fiber at $(z,w)$ of $M$. 
The set of these lifts $\bm{f}=(f,\tilde{f})$, with $f$ non-constant, is denoted by $\mathcal{S}(M)$.
\end{defi}
Note that equivalently, an analytic disc $f \in (\mathcal A^{k,\alpha})^{N}$ attached to  $M$ is stationary for $M$ if there exists $d$ real valued functions $c_1, \ldots, c_d : \partial \Delta \to \R$ such that $\sum_{j=1}^dc_j(\zeta)\partial r_j(0)\neq 0$ for all $\zeta \in \partial \Delta$   and such that the map 
$$\zeta \mapsto \zeta \sum_{j=1}^dc_j(\zeta)\partial r_j(f(\zeta), \overline{f(\zeta)})$$ defined on $\partial \Delta$ extends holomorphically on $\Delta$.

The set of such small discs is invariant under CR automorphisms; recall that if $F$ is a CR automorphism of $M$ and $f$ an analytic disc attached to $M$ then the map  $F\circ f$ defined on $\partial \Delta$ extends holomorphically to $\Delta$ (see Proposition 6.2.2 in \cite{ber} or Theorem 1 p. 200 in \cite{bo}). Moreover recall the following essential result due to Webster \cite{we} in the hypersurface case and to Tumanov \cite{tu} for higher codimension submanifolds.
\begin{prop}[\cite{tu}]\label{propco}
 Let $M$ be a $\mathcal{C}^{2}$ generic  real submanifold of $\C^N$ of codimension $d$ with local defining function of the form (\ref{eqred}). Then $M$ is Levi non-degenerate at $0$  in the sense of Tumanov if and only the conormal bundle $N^*M$ is totally real at $\left(0,\sum_{j=1}^dc_j\partial r_j(0)\right)$, where the $c_1,\ldots,c_d$ are such that $\sum_{j=1}^d c_jA_j$  is invertible.
 \end{prop}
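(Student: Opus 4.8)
The plan is to reduce the statement to a direct computation of the tangent space of $N^*M$ at the indicated point and to characterize its total reality in terms of the invertibility of $\sum_{j=1}^d c_jA_j$. Write $A(c):=\sum_{j=1}^d c_jA_j$ for $c=(c_1,\dots,c_d)\in\R^d$; this is a Hermitian matrix, and condition $\ttt$ asserts precisely that $A(c)$ is invertible for \emph{some} $c$. I regard $T^*\C^N$ as the complex manifold $\C^{2N}$ with holomorphic base coordinates $(z,w)$ and holomorphic fiber coordinates $(\tilde z,\tilde w)$, a covector being written $\transp{\tilde z}\,dz+\transp{\tilde w}\,dw$. Differentiating (\ref{eqred}) and using $\Ree w_\ell=\tfrac12(w_\ell+\bar w_\ell)$, I first record that
\begin{equation*}
\partial r_\ell(z,w)=\tfrac{1}{2}\,dw_\ell-(\transp{\bar z}A_\ell)\,dz+O(2),
\end{equation*}
so that $\partial r_\ell(0)=\tfrac12\,dw_\ell$ and the point $\mathbf p_0:=\left(0,\sum_j c_j\partial r_j(0)\right)$ corresponds to $(z,w,\tilde z,\tilde w)=(0,0,0,\tfrac12 c)$.

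Next I would parametrize $N^*M$ near $\mathbf p_0$. Since $(z,w)\in M$ and a conormal covector is a real combination $\sum_\ell\lambda_\ell\partial r_\ell(z,w)$, the manifold $N^*M$ is the image of the map $\Phi(z,s,\lambda)=(z,w,\tilde z,\tilde w)$, where $s=\Imm w\in\R^d$, $\Ree w$ is determined by $r=0$, $\lambda\in\R^d$, and from the formula above $\tilde w=\tfrac12\lambda$ and $\tilde z=-\transp{\bar z}A(\lambda)+O(2)$. These $2N=2n+2d$ real parameters give a local parametrization. Differentiating $\Phi$ at $(z,s,\lambda)=(0,0,c)$, and using that the quadratic terms $\transp{\bar z}A_\ell z$ and the $O(3)$ remainder contribute nothing to the differential at the origin, I obtain that $T_{\mathbf p_0}N^*M$ is spanned over $\R$ by the vectors
\begin{equation*}
(\delta z,0,-\overline{A(c)\delta z},0)\ \ (\delta z\in\C^n),\qquad (0,i\,\delta s,0,0)\ \ (\delta s\in\R^d),\qquad (0,0,0,\tfrac12\delta\lambda)\ \ (\delta\lambda\in\R^d),
\end{equation*}
where the Hermitian symmetry of $A(c)$ has been used to rewrite the antiholomorphic $\tilde z$-component $-\transp{\overline{\delta z}}A(c)$ as $-\overline{A(c)\delta z}$.

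Finally I would test total reality by computing $T_{\mathbf p_0}N^*M\cap i\,T_{\mathbf p_0}N^*M$. Setting a vector of the first space equal to $i$ times a vector of the second and comparing the $w$- and $\tilde w$-components, the reality of the parameters forces $\delta s$ and $\delta\lambda$ (and their primed counterparts) to vanish, since a purely real vector equals a purely imaginary one only if both are zero. Comparing the $z$- and $\tilde z$-components then reduces, after eliminating the auxiliary direction through $\delta z'=-i\,\delta z$, to the single equation $A(c)\delta z=0$. Hence the intersection is nontrivial exactly when $A(c)$ has nontrivial kernel, that is, $N^*M$ is totally real at $\mathbf p_0$ if and only if $A(c)$ is invertible. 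Since $\dim_\R N^*M=2N=\dim_\C T^*\C^N$, this is the maximal totally real dimension, so nothing further is needed.

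To conclude, condition $\ttt$ is exactly the existence of a $c$ with $A(c)$ invertible, so the equivalence follows immediately from the computation above: if $\ttt$ holds one chooses such a $c$ and $N^*M$ is totally real at the corresponding $\mathbf p_0$, while conversely total reality at $\mathbf p_0$ forces $A(c)$ to be invertible; the hypersurface case $d=1$ recovers Webster's result. The point requiring care is the complex structure on the cotangent fibers: it is precisely the antiholomorphic dependence $\tilde z\sim-\transp{\bar z}A(\lambda)$ of the conormal directions on the base point that produces the term $\overline{A(c)\delta z}$ and thereby ties total reality exactly to the invertibility of $A(c)$. Everything else is the routine verification that the higher-order terms of (\ref{eqred}) are negligible for the tangent space at $0$.
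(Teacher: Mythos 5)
Your proof is correct, and it is a genuinely self-contained argument for a statement the paper itself does not prove: Proposition \ref{propco} is quoted from Tumanov \cite{tu} (Webster \cite{we} in the hypersurface case), and the only place the paper verifies anything of this kind is inside the proof of Theorem \ref{theodiscs}, where the fact is ``proved again'' in its defining-function incarnation: the matrix $G(\zeta)$ of antiholomorphic derivatives of the $2n+2d$ defining functions $\tilde\rho$ of $\mathcal{N}Q(\zeta)$, evaluated along the initial disc, is block upper triangular with diagonal blocks $\frac12 I_d$, $G_2(\zeta)$, $-i\zeta I_d$, and $G_2(\zeta)$ is invertible precisely when $A=\sum_{j=1}^d c_jA_j$ is; invertibility of the matrix of $\bar\partial$-derivatives of $2N$ real defining functions is the standard criterion for total reality of a $2N$-dimensional submanifold of $\C^{2N}$. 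You run the dual computation: parametrize $N^*M$ by $(z,\Imm w,\lambda)$ and compute $T\cap iT$ directly. The two routes are linear-algebraically equivalent and isolate the same mechanism, namely the antiholomorphic coupling $\tilde z=-\transp{\bar z}A(\lambda)+\dots$, which for Hermitian $A(c)$ produces the single obstruction $A(c)\delta z=0$. Your version makes the role of the fiber directions transparent ($\delta s$ and $\delta\lambda$ are killed first by the $w$- and $\tilde w$-components); the paper's version has the advantage of being exactly the input needed for the Riemann--Hilbert linearization, where one needs $G(\zeta)$ invertible for \emph{all} $\zeta\in\partial\Delta$, not just total reality at one point.

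One imprecision should be repaired, though it does not damage the argument: it is not quite true that the $O(3)$ remainder of (\ref{eqred}) ``contributes nothing to the differential of $\Phi$ at the origin''. The remainder is $O(3)$ in the \emph{weighted} sense ($z$ of weight one, $\Imm w$ of weight two), so it may contain monomials such as $z_j\,\Imm w_k$, which are only of ordinary order two; their $z$-gradient is linear in $\Imm w$, hence the $\tilde z$-component of your parametrization is $-\transp{\bar z}A(\lambda)+O(2)$ with an $O(2)$ whose $s$-derivative at $(0,0,c)$ need not vanish. The correct tangent vectors in the $\delta s$ directions are therefore of the form $(0,i\,\delta s,E\,\delta s,0)$ for some matrix $E$. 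This is harmless: in the equation $v=iv'$, the $w$- and $\tilde w$-components force $\delta s=\delta s'=0$ and $\delta\lambda=\delta\lambda'=0$ \emph{before} the $\tilde z$-components are compared, so the term $E\,\delta s$ drops out and the computation reduces, exactly as you say, to $A(c)\delta z=0$. (The differential of $\Ree w$ at the origin, and the $\lambda$-derivative of the remainder at $z=0$, $s=0$, do vanish, so the rest of your spanning set is as claimed.) With that one sentence corrected, the proof is complete.
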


We end this section with an important remark on the smoothness of stationary discs.  Let $M$ be a $\mathcal{C}^{4}$ generic  real submanifold of $\C^N$ of codimension $d$  with local defining function of the form (\ref{eqred}). Assume that $M$ is Levi non-degenerate at $0$ in the sense of Tumanov  $\ttt$. Consider a lift of stationary disc ${\bm f}=(f,\tilde{f})$ for $M$ satisfying ${\bm f}(1)=(0,\sum_{j=1}^d c_j\partial r_j(0))$ where $\sum_{j=1}^d c_jA_j$  is invertible. It follows from Proposition \ref{propco} and from Chirka (Theorem 33 in \cite{ch}) that such discs are of class $\mathcal{C}^{2,\alpha}$ for any $0<\alpha<1$ near $\zeta=1$.  

\subsection{Partial indices and Maslov index} The construction of stationary discs attached to a generic real submanifold of $\C^N$ (see Theorem \ref{theodiscs}) relies on a non-linear Riemann-Hilbert problem whose study is related to certain geometric integers, namely the partial indices and the Maslov index, associated to the linearized problem. In this section, we recall the definition of these integers. 
We denote by $Gl_N(\C)$ the general linear group on $\C^N$. 
Let $G: \partial\Delta \to Gl_N(\C)$ be a smooth map. 
We consider a Birkhoff factorization 
(see Section 3  \cite{gl1}  or  \cite{ve}) of $-\overline{G^{-1}}G$ on $\partial \Delta$:
$$ -\overline{G(\zeta)}^{-1}G(\zeta)=
B^+(\zeta)
\begin{pmatrix}
	\zeta^{\kappa_1}& & & (0) \\ &\zeta^{\kappa_2} & & \\ & & \ddots & \\ (0)& & &\zeta^{\kappa_{N}}
\end{pmatrix}
B^-(\zeta),$$
where  $\zeta \in \partial \Delta$, $B^+: \bar{\Delta}\to Gl_N(\C)$ and 
$B^-: (\C \cup \infty)\setminus\Delta\to Gl_N(\C)$  
 are  smooth  maps, holomorphic on $\Delta$ and $\C \setminus \overline{\Delta}$ respectively. 
The integers $\kappa_1, \dots, \kappa_N$  are called the {\it partial indices} of 
$-\overline{G^{-1}}G$ and  their sum 
$\kappa:=\sum_{j=1}^N\kappa_j$ the {\it Maslov index} of $-\overline{G^{-1}}G$.  Recall that  the Maslov index $\kappa$ is equal to 
the winding number  of the function 
$$\zeta \mapsto \det\left(-\overline{G(\zeta)^{-1}}G(\zeta)\right)$$ at the origin (\cite{gl2}, see also Lemma B.1 \cite{bl} for a proof);  here $\det\left(-\overline{G(\zeta)^{-1}}G(\zeta)\right)$ denotes the determinant of $-\overline{G(\zeta)^{-1}}G(\zeta)$.

\section{Construction of stationary discs}
In this section, we construct lifts of stationary discs attached to small pertubations of a quadric submanifold, non-degenerate in the sense of Tumanov $\ttt$. In particular we show that the set of such lifts form a finite dimensional submanifold of the Banach space of analytic discs.

\subsection{Construction of stationary discs in the quadric case}
 Consider a quadric submanifold $Q\subset \C^N=\C^n_z\times\C^d_w$ of real codimension $d$
\begin{equation}\label{eqquadric}
\begin{cases}
\rho_1= \Re  e w_1- \transp\bar z A_1 z = 0\\
\ \ \ \ \vdots \\
\rho_d=\Re e w_d - \transp\bar z A_d z = 0 
\end{cases}
\end{equation}
where $A_1,\ldots,A_d$ are hermitian matrices of size $n$. We set $\rho=(\rho_1,\ldots,\rho_d)$.  
We will determine a special family of lifts of stationary discs in $\mathcal{S}(Q)$.  
We suppose the quadric $Q$ to be Levi non-degenerate at $0$ in the sense of Tumanov $\ttt$. Note that the Levi non-degeneracy condition $\ttt$  is not strictly needed to determine some stationary discs for $Q$ but will be essential later in order to construct discs attached to small perturbations of $Q$ (see Theorem \ref{theodiscs}). We have 
\begin{equation*}
\begin{cases}
\displaystyle \partial \rho_1 = (\partial_z \rho_1, \partial_w \rho_1)=  \left(-\transp\bar z (A_1)_1,\ldots, -\transp\bar z (A_1)_n, \frac{1}{2},0\ldots,0\right)\\
\ \ \ \ \vdots \\
\displaystyle \partial \rho_d = (\partial_z \rho_d, \partial_w \rho_d)= \left(-\transp\bar z (A_d)_1,\ldots, -\transp\bar z (A_d)_n, 0\ldots,0,\frac{1}{2}\right)
 \end{cases}
\end{equation*}
where $(A_j)_l$ denotes, for $j=1,\dots,d$ and $l=1,\ldots,n$, the $l^{\rm th}$ column of $A_j$. Consider a disc 
$f=(h,g) \in (\mathcal A^{k,\alpha})^{n+d}$, with $h \in (\mathcal A^{k,\alpha})^{n}$ and 
$g \in (\mathcal A^{k,\alpha})^{d}$,  and $d$ real valued functions 
$c_1, \ldots, c_d : \partial \Delta \to \R$ such that $\sum_{j=1}^dc_j(\zeta)\partial \rho_j(0)\neq 0$. Note that if the map 
$\zeta \mapsto \zeta \sum_{j=1}^dc_j(\zeta)\partial \rho_j(f(\zeta), \overline{f(\zeta)})$ defined on $\partial \Delta$ extends holomorphically on 
$\Delta$ then we must have $c_j(\zeta)=a_j\overline{\zeta}+b_j+\overline{a_j}\zeta$ with $a_j \in \C$ and $b_j \in \R$. We restrict to the case where  $c=(c_1,\ldots,c_d)$ is a real constant vector-valued function and chosen such that $A:=\sum_{j=1}^d c_j A_j$ is invertible.  It follows that 
$$\zeta \sum_{j=1}^dc_j(\zeta)\partial \rho_j(f(\zeta), \overline{f(\zeta)})=\left(-\zeta\transp\overline{h(\zeta)} A,\frac{\zeta}{2}c\right).$$   
Consequently, $h$ must be   of the form
\begin{equation}\label{eqdisch}
h(\zeta)=V+\zeta W,
\end{equation}
with $V \in \C^n$,  $W \in \C^n\setminus\{0\}$ and $g$ is thus of the form   
\begin{equation}\label{eqdiscg}
g_j(\zeta)=\transp\overline{V}A_jV+\transp\overline{W}A_jW+2\transp\overline{V}A_jW\,\zeta+iy_j 
\end{equation}
with $y_j \in \R$, $j=1,\ldots,d$.  Note that, in such case, we have 
\begin{equation}\label{eqdisctildeh}
\tilde{h}(\zeta)=-\zeta\transp\overline{h(\zeta)} A
\end{equation}
and 
\begin{equation}\label{eqdisctildeg}
\tilde{g}(\zeta)=\frac{\zeta}{2}c.
\end{equation}

\subsection{Defining equations of the conormal bundle and Riemann-Hilbert problem}
Let  $Q\subset \C^{n+d}$ be a quadric submanifold  of real codimension $d$ defined by $\{\rho=0\}$ (\ref{eqquadric}). Denote by $(z,w,\tilde{z},\tilde{w})$ the coordinates on $T^*\C^{n+d}$, with $\tilde{z}=(\tilde{z_1},\ldots,\tilde{z_n})$ and $\tilde{w}=(\tilde{w_1},\ldots,\tilde{w_d})$.  The $2n+2d$ real defining equations for $\mathcal{N}Q(\zeta),$ $\zeta \in \partial\Delta$, (see  (\ref{eqcon}) for its definition) are given by 

\begin{equation*}
\left\{
\begin{array}{lll} 

\displaystyle \tilde{\rho}_1(\zeta)(z,w,\tilde{z},\tilde{w}) & = & \displaystyle \frac{w_1+\overline{w_1}}{2} - \transp\bar{z} A_1z  = 0,\\
\vdots& \vdots&\vdots\\
\tilde{\rho}_d(\zeta)(z,w,\tilde{z},\tilde{w}) & = & \displaystyle \frac{w_d+\overline{w_d}}{2} - \transp\bar{z} A_dz  = 0,\\
\tilde{\rho}_{d+1}(\zeta)(z,w,\tilde{z},\tilde{w}) & = & \left(\tilde{z_1}+2 \transp\bar{z} \sum_{j=1}^d \tilde{w_j}(A_j)_1\right)+\left(\overline{\tilde{z_1}}+2 \transp z \sum_{j=1}^d \overline{\tilde{w_j}(A_j)_1}\right)= 0,\\
\vdots& \vdots&\vdots\\
\tilde{\rho}_{d+n}(\zeta)(z,w,\tilde{z},\tilde{w}) & = & \left(\tilde{z_n}+2 \transp\bar{z} \sum_{j=1}^d \tilde{w_j}(A_j)_n\right)+\left(\overline{\tilde{z_n}}+2 \transp z \sum_{j=1}^d \overline{\tilde{w_j}(A_j)_n}\right)=0,\\
\tilde{\rho}_{d+n+1}(\zeta)(z,w,\tilde{z},\tilde{w}) & = &  i\left(\tilde{z_1}+2 \transp\bar{z} \sum_{j=1}^d \tilde{w_j}(A_j)_1\right)-i\left(\overline{\tilde{z_1}}+2 \transp z \sum_{j=1}^d \overline{\tilde{w_j}(A_j)_1}\right)= 0,\\
\vdots& \vdots&\vdots\\
\tilde{\rho}_{2n+d}(\zeta)(z,w,\tilde{z},\tilde{w}) & = &  i\left(\tilde{z_n}+2 \transp\bar{z} \sum_{j=1}^d \tilde{w_j}(A_j)_n\right)-i\left(\overline{\tilde{z_n}}+2 \transp z \sum_{j=1}^d \overline{\tilde{w_j}(A_j)_n}\right) = 0,\\
\tilde{\rho}_{2n+d+1}(\zeta)(z,w,\tilde{z},\tilde{w}) & = & \displaystyle i\frac{\tilde{w_1}}{\zeta}-i\zeta\overline{\tilde{w_1}} = 0,\\
\vdots& \vdots&\vdots\\
\tilde{\rho}_{2n+2d}(\zeta)(z,w,\tilde{z},\tilde{w}) & = & \displaystyle i\frac{\tilde{w_d}}{\zeta}-i\zeta\overline{\tilde{w_d}} = 0,\\
\end{array}
\right.
\end{equation*}
where $(A_j)_l$ denotes, for $j=1,\dots,d$ and $l=1,\ldots,n$, the $l^{\rm th}$ column of $A_j$.  We set $\tilde{\rho}:=(\tilde{\rho_1},\ldots,\tilde{\rho}_{2n+2d})$. For a general quadric $M=\{r=0\}$ with $r=(r_1,\ldots,r_d)$ of the form ({\ref{eqred}}), 
 we denote by 
$\tilde{r}$ the corresponding defining functions of $\mathcal{N}M(\zeta)$. This allows to consider lifts of stationary discs as solutions of a nonlinear 
Riemann-Hilbert type problem. Indeed, an analytic disc $\bm{f} \in \left(\mathcal{A}^{k,\alpha}\right)^{2n+2d}$ is the lift of a stationary disc for $M=\{r=0\}$ 
if and only if 
\begin{equation}\label{eqstatrh}
\tilde{r}(\bm{f})=0 \ \mbox{ on } \partial\Delta.
\end{equation}
The next section is devoted to the  study of this problem. 
 \subsection{Construction of stationary discs}\label{secconst}
We recall that the set of lifts of non-constant stationary discs  attached to a generic real submanifold $M\subset \C^N$ is  denoted by $\mathcal{S}(M)$. We have the following theorem which is crucial in our approach of the  jet determination problem. 
\begin{theo}\label{theodiscs}

Let  $Q\subset \C^{n+d}$ be a quadric submanifold  of real codimension $d$ defined by $\{\rho=0\}$ (\ref{eqquadric}). 
 Assume that $Q$ is Levi non-degenerate at $0$ in the sense of Tumanov $\ttt$.
Consider an initial lift of a stationary disc, $\bm{f_0}=(h_0,g_0,\tilde{h_0},\tilde{g_0})$ of the form  (\ref{eqdisch}), 
 (\ref{eqdiscg}),  (\ref{eqdisctildeh}),  (\ref{eqdisctildeg}) where $c_1,\ldots,c_d$ are chosen such that the matrix 
 $A:=\sum_{j=1}^d c_jA_j$ is invertible. Then there exist  open 
neighborhoods $U$ of $\rho$ in $(\mathcal{C}^4(\B))^{d}$  and $V$ of $0$  in $\R^{4n+4d}$, a real number $\varepsilon>0$ and a map
$$\mathcal{F}:U \times V \to  \left(\mathcal{A}^{1,\alpha}\right)^{2n+2d} $$
 of class $\mathcal{C}^1$ such that:
\begin{enumerate}[i.]
\item $\mathcal{F}(\rho,0)=\bm{f_0}$,
\item for all $r\in U$, the map 
$$\mathcal{F}(r,\cdot):V\to \{\bm{f} \in \mathcal{S}(\{r=0\})\ \ | \  
\|\bm{f}-\bm{f_0}\|_{1,\alpha}<\varepsilon\}$$
is one-to-one and onto.
\end{enumerate}
\end{theo}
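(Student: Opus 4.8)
The plan is to read (\ref{eqstatrh}) as a nonlinear Riemann--Hilbert problem and to solve it by a parametrized implicit function theorem in the Banach spaces of analytic discs. I would introduce the map
$$\Phi(r,\bm f)=\tilde r(\bm f)\ \text{ on }\ \partial\Delta,$$
defined for $r$ in a neighborhood of $\rho$ in $(\mathcal{C}^4(\B))^d$ and for $\bm f\in(\mathcal{A}^{1,\alpha})^{2n+2d}$ with $\|\bm f-\bm f_0\|_{1,\alpha}<\varepsilon$, and taking values in the real Banach space $(\mathcal{C}^{1,\alpha})^{2n+2d}$. By (\ref{eqstatrh}) the zeros of $\Phi(r,\cdot)$ are exactly the lifts of stationary discs for $\{r=0\}$ close to $\bm f_0$, and $\Phi(\rho,\bm f_0)=0$. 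The $\mathcal{C}^4$ regularity imposed on $r$ is precisely what guarantees that this superposition (Nemytskii) operator is of class $\mathcal{C}^1$, so that the whole statement reduces to a study of the zero set of $\Phi$ near $(\rho,\bm f_0)$, hence to the analysis of the partial differential $D_{\bm f}\Phi(\rho,\bm f_0)$.

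Differentiating in $\bm f$ along the explicit solution $\bm f_0$, the operator $D_{\bm f}\Phi(\rho,\bm f_0)$ takes the form of a linear Riemann--Hilbert operator $\dot{\bm f}\mapsto 2\,\Ree\!\big(\overline{G(\zeta)}\,\dot{\bm f}(\zeta)\big)$, where $G(\zeta)$ is the $(2n+2d)\times(2n+2d)$ matrix of derivatives of $\tilde\rho$ with respect to the holomorphic coordinates $(z,w,\tilde z,\tilde w)$, evaluated along $\bm f_0(\zeta)$. Since $\bm f_0$ is the affine disc (\ref{eqdisch})--(\ref{eqdisctildeg}) and $\tilde\rho$ is quadratic, the entries of $G$ are explicit trigonometric polynomials in $\zeta$ depending only on $A_1,\dots,A_d$, on the vectors $V,W$, and on the constants $c_1,\dots,c_d$. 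Moreover the invertibility of $G(\zeta)$ for every $\zeta\in\partial\Delta$ expresses the total reality of the conormal bundle $N^*Q$ along $\bm f_0$, which holds by Proposition \ref{propco} thanks to the invertibility of $A=\sum_{j=1}^d c_jA_j$, that is, thanks to condition $\ttt$.

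The heart of the argument, and the step I expect to be the main obstacle, is the computation of the partial indices $\kappa_1,\dots,\kappa_{2n+2d}$ of $-\overline{G}^{-1}G$. Here I would exploit the block structure coming from the splitting of the coordinates into the $z$-, $w$-, $\tilde z$- and $\tilde w$-parts, together with the affine form of $\bm f_0$, in order to produce a Birkhoff factorization of $-\overline{G}^{-1}G$ by hand; the invertibility of $A$ is exactly what lets one uncouple the base disc from its cotangent lift and diagonalize the relevant blocks. I expect all the partial indices to equal $1$, so that the Maslov index is $\kappa=2n+2d$. Granting this, the Riemann--Hilbert theory of \cite{gl1,gl2,ve} shows that, all partial indices being $\ge -1$, the operator $D_{\bm f}\Phi(\rho,\bm f_0)$ is onto $(\mathcal{C}^{1,\alpha})^{2n+2d}$ and that its kernel is a real vector space of dimension $(2n+2d)+\kappa=4n+4d$, matching the dimension $4n+4d$ of the parameter space $V$.

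To conclude, I would fix a closed real complement $E$ with $(\mathcal{A}^{1,\alpha})^{2n+2d}=\ker D_{\bm f}\Phi(\rho,\bm f_0)\oplus E$ and apply the implicit function theorem to $\Phi$ in the $E$-direction: by the surjectivity and the dimension count, $D_{\bm f}\Phi(\rho,\bm f_0)$ restricts to a Banach space isomorphism $E\to(\mathcal{C}^{1,\alpha})^{2n+2d}$. This yields a $\mathcal{C}^1$ map $(r,k)\mapsto e=\psi(r,k)\in E$, defined for $r$ near $\rho$ and $k$ near $0$ in the kernel, with $\Phi(r,\bm f_0+k+\psi(r,k))=0$ and $\psi(\rho,0)=0$. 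Identifying the kernel with $\R^{4n+4d}$ through a fixed linear isomorphism carrying a neighborhood of $0$ onto $V$, I would set $\mathcal{F}(r,k)=\bm f_0+k+\psi(r,k)$; then $\mathcal{F}(\rho,0)=\bm f_0$, which gives (i). For (ii), the uniqueness clause of the implicit function theorem shows that every zero of $\Phi(r,\cdot)$ in a small $\mathcal{C}^{1,\alpha}$-ball around $\bm f_0$ is of this form, whence $\mathcal{F}(r,\cdot)$ is onto the nearby lifts in $\mathcal{S}(\{r=0\})$; injectivity is immediate by projecting $\mathcal{F}(r,k)-\bm f_0$ onto the kernel along $E$. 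Shrinking $U$, $V$ and $\varepsilon$ if necessary --- which is legitimate since the partial indices, being winding numbers, are stable under $\mathcal{C}^4$-small perturbations of $\rho$ --- makes $\mathcal{F}(r,\cdot)$ the desired bijection.
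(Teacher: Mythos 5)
Your proposal follows essentially the same route as the paper: the same map $F(r,\bm{f})=\tilde r(\bm{f})$ between the same Banach spaces $(\mathcal{C}^4(\B))^{d}\times(\mathcal{A}^{1,\alpha})^{2n+2d}\to(\mathcal{C}^{1,\alpha})^{2n+2d}$, linearization to $2\Ree\left[\overline{G(\zeta)}\,\cdot\,\right]$ with $G$ invertible via Proposition \ref{propco} and the invertibility of $A$, nonnegative partial indices and Maslov index $\kappa=2n+2d$ giving surjectivity and a kernel of real dimension $4n+4d$, and then the implicit function theorem. The only cosmetic differences are that the paper exploits the block-triangular form of $G$ (diagonal blocks $\tfrac12 I_d$, $G_2$, $-i\zeta I_d$, with $G_2$ reducing to $n$ copies of an explicit $2\times 2$ factor with all indices $1$) rather than your guessed all-ones factorization of the full matrix, and that your closing remark about stability of partial indices under perturbation is both unnecessary (the parametrized implicit function theorem at $(\rho,\bm{f_0})$ already covers all $r\in U$) and imprecise, since only the Maslov index, not the individual partial indices, is a winding number and stable.
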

As a direct corollary, we obtain
\begin{cor}\label{cordiscs}
Under the assumptions of Theorem \ref{theodiscs}, there exist  open 
neighborhoods $U$ of $\rho$ in $(\mathcal{C}^4(\B))^{d}$ and a real number $\varepsilon>0$ such that 
$$\{\bm{f} \in \mathcal{S}(\{r=0\})\ \ | \  
\|\bm{f}-\bm{f_0}\|_{1,\alpha}<\varepsilon\}$$ 
forms a $\mathcal{C}^1$ real submanifold of dimension $4n+4d$ of the Banach space of analytic discs.    
\end{cor}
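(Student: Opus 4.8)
The plan is to use the parametrization $\mathcal{F}$ furnished by Theorem~\ref{theodiscs} as a single global chart. I would fix $r\in U$, write $\mathcal{E}:=(\mathcal{A}^{1,\alpha})^{2n+2d}$ for the ambient Banach space, and set
$$\Phi:=\mathcal{F}(r,\cdot)\colon V\longrightarrow \mathcal{E},\qquad S_r:=\{\bm{f}\in\mathcal{S}(\{r=0\})\ |\ \|\bm{f}-\bm{f_0}\|_{1,\alpha}<\varepsilon\}.$$
By construction $\Phi$ is of class $\mathcal{C}^1$, and by part (ii) of Theorem~\ref{theodiscs} it is a bijection of the open set $V\subset\R^{4n+4d}$ onto $S_r$. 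It then suffices to show that $\Phi$ is a $\mathcal{C}^1$ embedding --- an injective immersion that is moreover a homeomorphism onto $S_r$ --- since the image of such a map of an open subset of $\R^{4n+4d}$ is a $\mathcal{C}^1$ submanifold of $\mathcal{E}$ of dimension $4n+4d$, with $\Phi$ itself as a global chart.

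The immersion property is where the finite-dimensionality of the parameter space pays off: for every $v\in V$ the differential $d\Phi_v\colon\R^{4n+4d}\to\mathcal{E}$ has finite-dimensional range, hence automatically closed and complemented in $\mathcal{E}$, so the only thing to verify is injectivity of $d\Phi_v$. I would extract this from the implicit-function-theorem construction behind Theorem~\ref{theodiscs}: the $4n+4d$ real parameters parametrize a complemented subspace $K\subset\mathcal{E}$ --- the solution space of the Riemann--Hilbert problem obtained by linearizing $\bm{f}\mapsto\tilde r(\bm{f})$ --- and near $\bm{f_0}$ the set $S_r$ is cut out as the graph over $K$ of a $\mathcal{C}^1$ map $\psi$ into a complement $Y$. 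After identifying $V$ with an open neighborhood of $0$ in $K$, this reads $\Phi(v)=v+\psi(v)$ with $\psi(v)\in Y$, so $d\Phi_v(k)=k+d\psi_v(k)$ with $d\psi_v(k)\in Y$; this is injective for every $v\in V$, and $\Phi$ is therefore an immersion on all of $V$.

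The same graph picture would give the homeomorphism property: letting $\pi\colon\mathcal{E}=K\oplus Y\to K$ denote the bounded linear projection along $Y$, one has $\pi\circ\Phi(v)=v$ for all $v\in V$, so $\Phi^{-1}=\pi|_{S_r}$ is continuous. Thus $\Phi$ is an injective immersion that is a homeomorphism onto its image, i.e.\ a $\mathcal{C}^1$ embedding, and $S_r=\Phi(V)$ is a $\mathcal{C}^1$ real submanifold of $\mathcal{E}$ of dimension $4n+4d$; since $r\in U$ was arbitrary, this holds for every $r\in U$ with the same $U$ and $\varepsilon$ as in Theorem~\ref{theodiscs}. The one genuinely non-formal ingredient --- and the main obstacle, in that it must be read off from the proof of Theorem~\ref{theodiscs} rather than from its statement --- is the complemented splitting $\mathcal{E}=K\oplus Y$, equivalently the injectivity of $d\Phi_v$. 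This rests on the non-degeneracy of the linearized Riemann--Hilbert problem, which is exactly what the hypothesis $\ttt$ and the values of the partial indices and Maslov index computed for the quadric are there to guarantee; granting that, everything else in the argument is soft.
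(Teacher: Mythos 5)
Your proposal is correct and follows essentially the same route the paper intends: the paper states Corollary \ref{cordiscs} as a direct consequence of Theorem \ref{theodiscs} with no separate argument, the tacit point being exactly what you spell out --- that the implicit-function-theorem construction behind $\mathcal{F}(r,\cdot)$ realizes the solution set as a $\mathcal{C}^1$ graph over the finite-dimensional (hence complemented) kernel of $\partial_2 F(\rho,\bm{f_0})$, whose real dimension $\kappa+2n+2d=4n+4d$ is computed in that proof. Your unpacking of why a bijective $\mathcal{C}^1$ parametrization by an open subset of $\R^{4n+4d}$ genuinely yields an embedded submanifold (injective differential via the splitting $\mathcal{E}=K\oplus Y$, continuous inverse via the projection $\pi$ along $Y$) is a correct filling-in of details the paper leaves implicit, not a different method.
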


\begin{remark} Working with the Banach spaces $\mathcal{C}^4(\B)$ and $\mathcal{A}^{1,\alpha}$ is crucial for our approach, which is based on the implicit function theorem. The required smoothness is indeed necessary  for the below map $F$ (see (\ref{eqF})) to be $\mathcal{C}^1$.     
\end{remark}
\begin{remark}
In \cite{su-tu}, Sukhov and Tumanov prove Theorem \ref{theodiscs} in the special case the model quadric is $\mathbb{S}^3\times \mathbb{S}^3 \subset \C^4$, where $\mathbb{S}^3$ denotes the unit sphere in $\C^2$ (see Corollary 3.2 and Theorem 3.4 \cite{su-tu}). Their approach also relies on the study of the corresponding Riemann-Hilbert problem using the methods developed by \cite{fo,gl1,gl2}. See also \cite{tu, sc-tu}. 
\end{remark}
 \begin{proof}[Proof of Theorem \ref{theodiscs}]
In a neighborhood of $(\rho,\bm{f_0})$ in 
$(\mathcal{C}^4(\B))^{d} \times \left(\mathcal{A}^{1,\alpha}\right)^{2n+2d}$,
 we define the following map between Banach spaces
$$F: (\mathcal{C}^4(\B))^{d} \times \left(\mathcal{A}^{1,\alpha}\right)^{2n+2d}
\to (\mathcal C^{1,\alpha})^{2n+2d} $$
by 
\begin{equation}\label{eqF}
F(r,\bm{f}):=\tilde{r}(\bm{f}).
\end{equation}
Here we use the notation
$$\tilde{r}(\bm{f})(\zeta)=\tilde{r}(\zeta)(\bm{f}(\zeta)), \ \zeta \in \partial \Delta.$$
The map $F$ is of class $\mathcal{C}^1$ (see Lemma 5.1 in \cite{hi-ta} and Lemma 6.1 and Lemma 11.2 in \cite{gl1} which generalizes to $\mathcal{C}^{1,\alpha}$). Recall that an analytic disc $\bm{f} \in \left(\mathcal{A}^{1,\alpha}\right)^{2n+2d}$ is the lift of a stationary disc for  $\{r=0\}$ if and only if it solves the nonlinear Riemann-Hilbert problem (\ref{eqstatrh}). In other words, 
 for any fixed $r\in (\mathcal{C}^4(\B))^{d}$, 
 the zero set of $F(r,\cdot)$ coincides with 
$\mathcal{S}(\{r=0\})$. In order to prove Theorem \ref{theodiscs}, we apply the implicit function theorem to the map $F$. We need to consider the partial derivative of $F$ with respect to $\left(\mathcal{A}^{1,\alpha}\right)^{2n+2d}$ at 
$(\rho,\bm{f_0})$
\begin{equation}\label{eqrh0}
\partial_2 F(\rho,\bm{f_0})\bm{f}=2\Re e  \left[\overline{G(\zeta)}\bm{f}\right]
\end{equation}
where $G(\zeta)$ is the following complex valued square matrix of size $2n+2d$  
$$G(\zeta):=\left({\tilde\rho}_{\overline{z}}(\bm{f_0}),
{\tilde\rho}_{\overline{w}}(\bm{f_0}),
{\tilde\rho}_{\overline{\tilde{z}}}(\bm{f_0}),{\tilde\rho}_{\overline{\tilde{w}}}(\bm{f_0})\right).$$
Recall that the non-degeneracy in the sense of Tumanov is equivalent to the fact that the conormal bundle is totally real (see Proposition 
\ref{propco}); with respect to the choice of the initial disc, this ensures that the matrix $G(\zeta)$ in invertible for all $\zeta \in \partial \Delta$. For  sake of clarity, this fact will be 
proved again below.  
We need to show that (see p. 39 \cite{gl2})
\begin{enumerate}[i.]
\item the map $\partial_2 F(\rho,\bm{f_0})$ is onto, and
\item the real dimension of the kernel of $\partial_2 F(\rho,\bm{f_0})$ is $4n+4d$. 
\end{enumerate}
\subsubsection{Surjectivity of  $\partial_2 F(\rho,\bm{f_0})$}
It is more convenient to reorder coordinates and consider $(w,z,\tilde{z},\tilde{w})$ instead of 
$(z,w,\tilde{z}, \tilde{w})$. Accordingly, discs $\bm{f}$ are of the form $(g,h,\tilde{h},\tilde{g})$. We still denote by $G(\zeta)$ the corresponding reordered matrix, namely
    $$G(\zeta):=\left({\tilde\rho}_{\overline{w}}(\bm{f_0}),
{\tilde\rho}_{\overline{z}}(\bm{f_0}),
{\tilde\rho}_{\overline{\tilde{z}}}(\bm{f_0}),{\tilde\rho}_{\overline{\tilde{w}}}(\bm{f_0})\right).$$
The matrix $G(\zeta)$ is square of size $2n+2d$,  upper block triangular and given by 
\begin{equation}\label{eqG}
G(\zeta)=\left(\begin{array}{cccccccccccc}
\frac{1}{2}I_d &  & (*)\\
   & G_2(\zeta) &  \\
 (0) &   &  -i\zeta I_d\\
\end{array}\right),
\end{equation}
where $I_d$ denotes the identity matrix of size $d$ and $G_2(\zeta)$ is the following square matrix of size $2n$
\begin{equation}\label{eqG_2first}
G_2=\left(\begin{matrix}
2\sum_{j=1}^d{\tilde{g_j}}(A_j)_{11}& \hdots  &2\sum_{j=1}^d{\tilde{g_j}}(A_j)_{n1}     & 1  &    &  0 \\
\vdots & \vdots & \vdots &  & \ddots  \\

2\sum_{j=1}^d{\tilde{g_j}}(A_j)_{1n}&  \hdots &    2\sum_{j=1}^d{\tilde{g_j}}(A_j)_{nn} &0   &     & 1\\

2i\sum_{j=1}^d{\tilde{g_j}}(A_j)_{11}  &  \hdots & 2i\sum_{j=1}^d{\tilde{g_j}}(A_j)_{n1}  &  -i  &     &  0 \\
\vdots & \vdots & \vdots &  & \ddots  \\

2i\sum_{j=1}^d{\tilde{g_j}}(A_j)_{1n}  &   \hdots   & 2\sum_{j=1}^d{\tilde{g_j}}(A_j)_{nn}   &  0   &  & -i \\
\end{matrix}\right).
\end{equation}
where $\tilde{g_j}=c_j/2\zeta$ and $(A_j)_{kl}$, $k=1,\ldots,n$, $l=1,\ldots,n$,  denotes the $kl$ coefficient of $A_j$. Recall   that the
real constants $c_1,\ldots,c_d$ are chosen such that $A:=\sum_{j=1}^d c_j A_j$ is invertible. It follows that  
$$G_2(\zeta)=\left(\begin{matrix}
\zeta \transp A& I_n &     \\

i\zeta \transp A & -iI_n       \\

\end{matrix}\right)$$
is invertible on $\partial \Delta$, and that, accordingly, so is $G(\zeta)$. Due to the expression of $G(\zeta)$, in order to show its sujectivity, it is enough to show that the map 
$$L_1: \left(\mathcal{A}^{1,\alpha}\right)^{2n} \to (\mathcal C^{1,\alpha})^{2n}$$
defined by $L_1=2\Re e  \left[\overline{G_2(\zeta)} \ \cdot \right]$ is surjective.  For this purpose, we will show that the partial indices $k_1,\ldots,k_{2n}$ of  
$-\overline{G_2^{-1}}G_2$ are nonnegative (see \cite{gl1} or Section 4 in \cite{gl2}). Right multiplication by the constant matrix 
$\left(\begin{array}{cc} \transp A^{-1}&0\\ 0&I_{n}  \end{array}\right)$ does not change the partial indices, and gives us the matrix 
 $$\left(\begin{matrix}
\zeta I_n& I_n &     \\

i\zeta I_n& -iI_n       \\

\end{matrix}\right)$$
After permuting rows and columns, which also does not change the partial indices, we obtain 
\begin{equation}\label{eqG_2}
G_2^{\flat }=\left(\begin{matrix}
R&  & 0      \\
 & \ddots &   \\
0  & & R        \\
\end{matrix}\right), \mbox{ with } R(\zeta)=\left(\begin{matrix}
\zeta & 1      \\
i\zeta & -i   \\
\end{matrix}\right).
\end{equation}
By a direct computation we have
$$-\overline{(G_2^{\flat})^{-1}}{G_2^{\flat}}=\left(\begin{matrix}
P&  & 0      \\
 & \ddots &   \\

0  & & P         \\
\end{matrix}\right) \mbox{ with } P(\zeta)=-\left(\begin{matrix}
0 & \zeta       \\
\zeta & 0  \\
\end{matrix}\right),$$
which, for instance, decomposes as
which decomposes as
\begin{equation}\label{eqP}
P(\zeta)=\left(\begin{matrix}
0 & -1       \\
-1 & 0  \\
\end{matrix}\right) \left(\begin{matrix}
\zeta &  0      \\
0 & \zeta  \\
\end{matrix}\right) \left(\begin{matrix}
1 & 0   \\
0 & 1  \\
\end{matrix}\right).
\end{equation}
It follows that the partial indices of $-\overline{(G_2^{\flat})^{-1}}{G_2^{\flat}}$ are all equal to one and that, therefore, the map $\partial_2 F(\rho,\bm{f_0})$
is onto.

\subsubsection{Kernel of  $\partial_2 F(\rho,\bm{f_0})$}
Recall (see \cite{gl1} or Section 5 in \cite{gl2}) 
 that the real dimension  of the kernel of  $\partial_2 F(\rho,\bm{f_0})$ is given by $\kappa+2n+2d,$
where $\kappa$ is the Maslov index of $-\overline{G(\zeta)^{-1}}G(\zeta)$ and is equal to  
the winding number  of the function $\zeta \mapsto \det\left(-\overline{G(\zeta)^{-1}}G(\zeta)\right)$ at the origin.
A direct computation - using the form of $G(\zeta)$ and in particular $G_2(\zeta)$ - shows that $\kappa=2n+2d$.
 
\end{proof}
\section{Geometric properties of stationary discs}
In \cite{be-bl}, the first two authors describe geometric properties of stationary discs attached to small deformations  of a given Levi non-degenerate hyperquadric; it is important for that approach to have an explicit description of all  stationary discs attached to the model hypersurface. In our case, however, it does not seem possible to obtain an explicit description of  all  stationary discs attached to the model quadric in general, and therefore, it is more subtle to obtain that centers of stationary discs fill an open set (Proposition \ref{propcenteropen}) and that their lifts are determined by their $1$-jet at $1$ (Proposition \ref{propjetdiscs}).

\subsection{Discs with pointwise constraints} 
In  Section \ref{secfill}, we wish to show that centers of stationary discs fill an open set. This will be achieved by showing that the map 
${\bm f} \mapsto f(0) \in \C^{n+d}$ restricted to a smaller family of discs is a diffeomorphism onto its image. For dimensional reasons, it is essential to 
impose pointwise constraints on stationary discs in order to obtain a $2n+2d$ real dimensional family (see Theorem \ref{theodiscscons}).

We consider a quadric submanifold $Q\subset \C^{n+d}$ of real codimension $d$ given by $\{\rho=0\}$ (\ref{eqquadric}), Levi non-degenerate at $0$ in 
the sense of Tumanov $\ttt$ and an initial  lift of a stationary disc $\bm{f_0}=(h_0,g_0,\tilde{h_0},\tilde{g_0})$ of the form  (\ref{eqdisch}), (\ref{eqdiscg}),  
(\ref{eqdisctildeh}),  (\ref{eqdisctildeg}) where $c_1,\ldots,c_d$ are chosen such that the matrix  $A:=\sum_{j=1}^d c_jA_j$ is invertible. 
We wish to show that for $r$ close enough to $\rho$ and for  some positive $\varepsilon$, the set  
$$\left\{{\bm f} \in \mathcal{S}(\{r=0\}) , \ 
\|\bm{f}-\bm{f_0}\|_{1,\alpha}<\varepsilon, \ f(1)=0, \ \tilde{g}(1)=\left(\frac{c_1}{2},\ldots,\frac{c_d}{2}\right)\right\}$$ is a real submanifold of dimension $2n+2d$ of the Banach space of analytic discs. 
To this end we  introduce spaces of analytic discs with prescribed pointwise constraints.  
We denote by $\mathcal A^{1,\alpha}_{0}$ the subspace of $\mathcal C_{\C}^{1,\alpha}$ of functions of the form $(1-\zeta) f$, with  
$f\in \mathcal A^{1,\alpha}$.  We equip 
$\mathcal A^{1,\alpha}_{0}$ with the following norm 
\begin{equation}\label{eqnorm}
\|(1-\zeta) f\|_{\mathcal A^{1,\alpha}_{0}}
=\vnorm{ f }_{\mathcal{C}_{\C}^{1,\alpha}}
\end{equation}
which makes it  a Banach space and isomorphic to $\mathcal A^{1,\alpha}$.  We also denote by 
$\mathcal C_{0}^{1,\alpha}$ the subspace of $\mathcal C^{1,\alpha}$ of functions of the form  $(1-\zeta) v$
 with $v\in \mathcal C_\C^{1,\alpha}$. The space $\mathcal C_{0}^{1,\alpha}$ is equipped with the norm
$$\|(1-\zeta) f\|_{\mathcal C_{0}^{1,\alpha}}=\vnorm{ f }_{\mathcal C_\C^{1,\alpha}}.$$ 
Note that $\mathcal C_{0}^{k,\alpha}$ is a Banach space. 

Recall that the initial lift of stationary disc  $\bm{f_0}=(h_0,g_0,\tilde{h_0},\tilde{g_0})$  that  we are considering is of the form  (\ref{eqdisch}), 
 (\ref{eqdiscg}),  (\ref{eqdisctildeh}),  (\ref{eqdisctildeg}) where $c=(c_1,\ldots,c_d)$ is chosen such that the matrix 
 $\sum_{j=1}^d c_jA_j$ is invertible. Assume furthermore  that $W=-V$; more explicitly 
 \begin{equation}\label{eqinit}
 \bm{f_0}=\left((1-\zeta)V,2(1-\zeta)\transp \overline{V}A_1V,\ldots,2(1-\zeta)\transp \overline{V}A_dV,(1-\zeta)\transp \overline{V}A, \frac{\zeta}{2} c\right).
 \end{equation} 
We define the affine space $\mathcal{A}$ to be the subset of $(\mathcal{A}^{1,\alpha})^{2n+2d}$ of discs of the form 
$$\left((1-\zeta)h,(1-\zeta)g,(1-\zeta)\tilde{h},(1-\zeta)\tilde{g}+\frac{\zeta}{2} c\right),$$ 
 and we set
$$\mathcal{S}_0(\{r=0\})=\mathcal{S}(\{r=0\})\cap \mathcal{A}.$$ 
Notice that $\bm{f_0} \in \mathcal{S}_0(\{r=0\})$. We have
  \begin{theo}\label{theodiscscons}
Let $Q\subset \C^{n+d}$ be a quadric submanifold of real codimension $d$ given by (\ref{eqquadric}), Levi non-degenerate at $0$ in the sense of Tumanov $\ttt$.  Let $\bm{f_0}=(h_0,g_0,\tilde{h_0},\tilde{g_0})$ a lift of a stationary disc for $Q$  of the form  (\ref{eqinit}). Then there exist  open 
neighborhoods $U$ of $\rho$ in $(\mathcal{C}^4(\B))^{d}$  and $V$ of $0$  in $\R^{2n+2d}$, a real number $\varepsilon>0$ and a map
$$\mathcal{F}_0:U \times V \to  \mathcal{A} $$
 of class $\mathcal{C}^1$ such that:
\begin{enumerate}[i.]
\item $\mathcal{F}_0(\rho,0)=\bm{f_0}$,

\item for all $r\in U$ the map 
$$\mathcal{F}_0(r,\cdot):V\to \{\bm{f} \in \mathcal{S}_0(\{r=0\})\ \ | \  
\|\bm{f}-\bm{f_0}\|_{\mathcal A^{1,\alpha}_{0}}<\varepsilon\}$$
is one-to-one and onto.
\end{enumerate}
\end{theo}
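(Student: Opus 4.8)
The plan is to follow verbatim the scheme of the proof of Theorem~\ref{theodiscs}, applying the implicit function theorem, but now to the \emph{restriction} of the Riemann--Hilbert map to the affine space $\mathcal{A}$, so that the pointwise constraints $f(1)=0$ and $\tilde g(1)=(c_1/2,\ldots,c_d/2)$ are built into the domain rather than imposed as additional equations. Concretely, in a neighborhood of $(\rho,\bm{f_0})$ in $(\mathcal{C}^4(\B))^{d}\times\mathcal{A}$, I would define
$$F_0(r,\bm{f}):=\tilde r(\bm f),$$
with the same meaning as in \eqref{eqF}. Since every disc of $\mathcal{A}$ takes the fixed value $\bm f(1)=\bm{f_0}(1)=(0,0,0,c/2)$ at $\zeta=1$, and since the conormal defining equations $\tilde r$ vanish at $(0,0,0,c/2)$ for $\zeta=1$ (here one uses that the $c_j$ are real and that $r_j=\Re e\,w_j-\transp\bar z A_jz+O(3)$), the output $F_0(r,\bm f)$ always vanishes at $\zeta=1$; one then checks, using $1-\zeta=-\zeta\,\overline{(1-\zeta)}$ on $\partial\Delta$, that it factors as $(1-\zeta)v$ and hence lands in $(\mathcal C_0^{1,\alpha})^{2n+2d}$. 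As in the unconstrained case, $F_0$ is of class $\mathcal{C}^1$, and for fixed $r$ its zero set coincides with $\mathcal{S}_0(\{r=0\})=\mathcal S(\{r=0\})\cap\mathcal A$.

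Next I would compute the partial derivative with respect to the disc variable. Because the tangent space to $\mathcal{A}$ at $\bm{f_0}$ is $(\mathcal A_0^{1,\alpha})^{2n+2d}$, the linearization is the restriction
$$\partial_2 F_0(\rho,\bm{f_0})\bm f=2\Re e\!\left[\overline{G(\zeta)}\,\bm f\right],\qquad \bm f\in(\mathcal A_0^{1,\alpha})^{2n+2d},$$
of the operator studied in Theorem~\ref{theodiscs}, with exactly the same matrix $G(\zeta)$ from \eqref{eqG}. For the implicit function theorem I then need two facts: that $\partial_2 F_0(\rho,\bm{f_0})$ is onto $(\mathcal C_0^{1,\alpha})^{2n+2d}$, and that its kernel has real dimension $2n+2d$.

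Both would follow from the observation that encoding the factor $(1-\zeta)$ in the source and target spaces shifts the Birkhoff factorization. Writing $\bm f=(1-\zeta)\bm g$ with $\bm g\in(\mathcal A^{1,\alpha})^{2n+2d}$ and using again $1-\zeta=-\zeta\,\overline{(1-\zeta)}$, a direct computation conjugates $\partial_2F_0(\rho,\bm{f_0})$, via the isomorphisms $\mathcal A_0^{1,\alpha}\cong\mathcal A^{1,\alpha}$ and $\mathcal C_0^{1,\alpha}\cong\mathcal C_\C^{1,\alpha}$ furnished by \eqref{eqnorm}, to an operator associated with $G'(\zeta)=(1-\bar\zeta)G(\zeta)$, for which
$$-\overline{G'(\zeta)^{-1}}G'(\zeta)=-\tfrac{1}{\zeta}\left(-\overline{G(\zeta)^{-1}}G(\zeta)\right).$$
The constant factor being irrelevant for the partial indices, multiplication by $\zeta^{-1}$ lowers each partial index by one and turns the Maslov index into $\kappa-(2n+2d)=0$. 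Since the partial indices of $-\overline{G^{-1}}G$ are all nonnegative (the $2n$ coming from $G_2$ are equal to one by \eqref{eqG_2}, the remaining $2d$ nonnegative), the shifted indices are all $\geq -1$; hence the constrained operator is still surjective and its kernel has real dimension $0+(2n+2d)=2n+2d$. Equivalently, and perhaps more transparently, one checks that $\ker\partial_2 F_0(\rho,\bm{f_0})=\{\bm f\in\ker\partial_2 F(\rho,\bm{f_0}):\bm f(1)=0\}$, and that the evaluation $\bm f\mapsto\bm f(1)$ maps the $(4n+4d)$-dimensional kernel of $\partial_2 F(\rho,\bm{f_0})$ onto the $(2n+2d)$-real-dimensional space cut out by $2\Re e[\overline{G(1)}\,\cdot\,]=0$, so that the condition $\bm f(1)=0$ removes exactly $2n+2d$ real dimensions.

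With these two facts in hand, the implicit function theorem (in the form used on p.~39 of \cite{gl2}) produces the neighborhoods $U$, $V$, the number $\varepsilon>0$ and the $\mathcal{C}^1$ map $\mathcal F_0$ with the stated properties, exactly as in Theorem~\ref{theodiscs} but with $2n+2d$ in place of $4n+4d$. I expect the main obstacle to be precisely the index and dimension bookkeeping of the constrained Riemann--Hilbert problem: making rigorous, within the weighted spaces $\mathcal A_0^{1,\alpha}$ and $\mathcal C_0^{1,\alpha}$, that the factor $(1-\zeta)$ lowers the kernel dimension by exactly $2n+2d$ while preserving surjectivity. The subtlety is that dividing a boundary-vanishing $\mathcal C^{1,\alpha}$ function by $(1-\zeta)$ is in general only $\mathcal C^{0,\alpha}$, and it is exactly the structure of $\partial_2F_0$ together with the norm \eqref{eqnorm} that guarantees the output remains in $\mathcal C_0^{1,\alpha}$.
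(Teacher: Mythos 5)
Your overall strategy is the same as the paper's. The paper also builds the pointwise constraints into the function spaces rather than imposing extra equations: it defines $F_0(r,\bm{f}):=\tilde r\left(h,g,\tilde h,\tilde g+\frac{\zeta}{2}c\right)$ on $(\mathcal{C}^4(\B))^{d}\times(\mathcal{A}^{1,\alpha}_0)^{2n+2d}$ with values in $(\mathcal C_0^{1,\alpha})^{2n+2d}$ (your restriction to the affine space $\mathcal{A}$ is this map up to the translation by $\frac{\zeta}{2}c$), observes that $\partial_2F_0$ is again $2\Re e\left[\overline{G(\zeta)}\,\cdot\,\right]$ with the same $G$ as in (\ref{eqG}), and concludes by the implicit function theorem. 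The only divergence is how surjectivity and the kernel dimension $2n+2d$ are obtained: the paper simply invokes Theorem \ref{theoind} (Theorem 2.4 of \cite{be-de2}) for the constrained Riemann--Hilbert problem, applied to the three diagonal blocks, namely $G_1=\frac12 I_d$ with its $d$ partial indices equal to $0$, $G_2$ of (\ref{eqG_2first}) with its $2n$ indices equal to $1$, and $G_3=-i\zeta I_d$ with its $d$ indices equal to $2$; surjectivity follows from nonnegativity of the \emph{block} indices, and the kernel dimension is the Maslov index $\kappa=0\cdot d+1\cdot 2n+2\cdot d=2n+2d$. Your conjugation $\bm f=(1-\zeta)\bm g$, with the factor $-\bar\zeta$ shifting each partial index by $-1$ and the Maslov index by $-(2n+2d)$, is indeed the mechanism behind that theorem, and your numbers agree with the paper's.

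There is, however, one genuine soft spot in your sketch: you deduce that the partial indices of the full matrix $-\overline{G^{-1}}G$ are nonnegative ``because the $2n$ from $G_2$ equal one and the remaining $2d$ are nonnegative.'' The partial indices of a block-triangular symbol are \emph{not} the concatenation of the block indices: for instance $\begin{pmatrix}1&\zeta\\0&\zeta^2\end{pmatrix}$ has diagonal exponents $(0,2)$ but partial indices $(1,1)$, as the factorization $\begin{pmatrix}1&0\\ \zeta&-1\end{pmatrix}\begin{pmatrix}\zeta&0\\0&\zeta\end{pmatrix}\begin{pmatrix}\zeta^{-1}&1\\1&0\end{pmatrix}$ shows. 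So nonnegativity of the full-matrix indices does not follow formally from the block data, and your appeal to Globevnik's criterion for the full shifted symbol is not justified as written; ruling out a negative full index would need an additional (majorization-type) argument. This is precisely why Theorem \ref{theoind} is phrased in terms of the diagonal blocks only: surjectivity is obtained by back-substitution block by block (solve the $-i\zeta I_d$-rows for $\tilde g$, then the $G_2$-rows for $(h,\tilde h)$, then the $\frac12 I_d$-rows for $g$, cf.\ (\ref{eqGprecise})), which uses only block indices $\geq 0$, while the kernel dimension needs only surjectivity plus the Maslov index, i.e.\ the winding of $\det\left(-\overline{G^{-1}}G\right)$, which is insensitive to the off-diagonal entries. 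Your alternative count via the evaluation $\bm f\mapsto\bm f(1)$ on the $(4n+4d)$-dimensional unconstrained kernel is likewise asserted rather than proved, and your closing worry is legitimate: dividing a $\mathcal C^{1,\alpha}$ function vanishing at $1$ by $1-\zeta$ costs a derivative in general, which is exactly the kind of bookkeeping absorbed by quoting \cite{be-de2} as the paper does. With the blockwise surjectivity argument substituted for the full-matrix index claim, your proof becomes correct and is essentially the paper's.
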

We will need the following theorem from \cite{be-de2}.
 \begin{theo}[Theorem 2.4 \cite{be-de2}]\label{theoind}
Let $G: \partial \Delta \to  GL_N(\C)$ be a smooth map of the form
$$
G(\zeta)=\left(\begin{array}{cccc}G_1(\zeta)& & & (*) \\ &G_2(\zeta) & & \\ & & \ddots & \\ (0)& & &G_r(\zeta)\end{array}\right),
$$
where $G_j(\zeta) \in GL_{N_j}(\C)$ for all $j=1,\ldots,r$, for all $\zeta \in \partial \Delta$. Let $0<\alpha<1$. Consider the following operator 
$$L: \left(\mathcal{A}^{1,\alpha}_{0}\right)^{N}
\to  \left(\mathcal C_{0}^{1,\alpha}\right)^{N}$$
defined by
\begin{equation*}
L({\bm f})=2\Re e \left[\overline{G}{\bm f}\right].
\end{equation*}
 For  $j=1,\ldots,r$ we denote by $\kappa_{l}^{j}$, $l=1,\ldots, N_j$, the partial indices of  $-\overline{G_j}^{-1}G_j$ and by $\kappa$ the 
Maslov index of $-\overline{G}^{-1}G$.
We have
\begin{enumerate}[(i)]
\item If $\kappa_{l}^j \geq 0$ for all \ $l=1,\ldots,N_j$  
and  $j=1,\ldots,r$ then the map $L$ is onto.  
\item Assume that $L$ is onto. Then the kernel of $L$ has real dimension $\kappa$. 
\end{enumerate}
\end{theo}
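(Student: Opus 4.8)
\emph{Plan.} The plan is to deduce the statement from the classical Riemann--Hilbert theory of Vekua and Globevnik \cite{ve,gl1,gl2} for the \emph{unconstrained} operator
$$\tilde L\colon (\mathcal A^{1,\alpha})^N\to(\mathcal C^{1,\alpha})^N,\qquad \tilde L(\bm f)=2\Re e[\overline G\bm f],$$
which is onto precisely when all partial indices of $-\overline G^{-1}G$ are $\ge -1$, in which case $\dim_\R\ker\tilde L=N+\kappa$. The whole point is to track how the pointwise vanishing built into $\mathcal A^{1,\alpha}_{0}$ and $\mathcal C^{1,\alpha}_{0}$ shifts these thresholds. Writing $\bm f=(1-\zeta)\bm u$ with $\bm u\in(\mathcal A^{1,\alpha})^N$ and using the boundary identity $\overline{1-\zeta}=-\zeta^{-1}(1-\zeta)$ on $\partial\Delta$, one computes
$$L(\bm f)=2\Re e\!\left[\overline G(1-\zeta)\bm u\right]=(1-\zeta)\bigl[\overline G\,\bm u-\zeta^{-1}G\,\overline{\bm u}\bigr],$$
so after dividing by $(1-\zeta)$ the constrained problem becomes an unconstrained real-linear problem for $\bm u$ governed by $-\overline G^{-1}G$ twisted by $\zeta^{-1}$. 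This twist lowers the relevant indices by one, so the constrained operator with symbol $G$ should behave like the unconstrained operator with partial indices $\{\kappa^j_l-1\}$; the scalar computations below make this precise and yield both the threshold ``$\kappa^j_l\ge 0$'' and the kernel count, once the block structure is handled.

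For surjectivity (i) I would exploit the block upper-triangular form of $G$ by back-substitution. Since $-\overline G^{-1}G$ is again block upper-triangular with diagonal blocks $-\overline{G_j}^{-1}G_j$, the last block of equations involves only the last $N_r$ components through $G_r$, and each preceding block involves its own diagonal block plus inhomogeneous terms from the already-solved lower blocks; hence $L$ is onto as soon as each diagonal-block operator $\bm g\mapsto 2\Re e[\overline{G_j}\bm g]$ is onto on the constrained spaces. For a single block I would perform a Birkhoff factorization $-\overline{G_j}^{-1}G_j=B^+\Lambda B^-$ with $\Lambda=\mathrm{diag}(\zeta^{\kappa^j_1},\dots,\zeta^{\kappa^j_{N_j}})$, using the symmetry $\overline{M}M=I$ for $M=-\overline{G_j}^{-1}G_j$ to arrange $\overline{B^-}=(B^+)^{-1}$. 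As $B^+$ is holomorphic and invertible on $\overline\Delta$ (so $B^+(1)$ is invertible), multiplication by $B^\pm$ preserves both $\mathcal A^{1,\alpha}_{0}$ and $\mathcal C^{1,\alpha}_{0}$ and decouples the block into scalar constrained problems with symbol $\zeta^{\kappa^j_l}$. Each such scalar problem, solved by matching Fourier coefficients of $(1-\zeta)u$, is surjective exactly when $\kappa^j_l\ge 0$, which proves (i).

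For the kernel dimension (ii) I would first note that onto-ness of $L$ forces, by the same back-substitution, onto-ness of every diagonal block, hence $\kappa^j_l\ge 0$ for all $j,l$. I would then deform the off-diagonal block $(\ast)$ to zero along $t(\ast)$, $t\in[0,1]$: the diagonal blocks, their partial indices, and the Maslov index $\kappa$ (the winding of $\det(-\overline G^{-1}G)$, which ignores $(\ast)$) are all unchanged, each $L_t$ stays onto by back-substitution, and therefore $\dim_\R\ker L_t$ is constant. At $t=0$ the operator is block-diagonal, so its kernel is the direct sum of block kernels, which after Birkhoff reduction split into scalar kernels. The crux of the scalar count is that on the kernel of the scalar problem with symbol $\zeta^{m}$ one has $f=\zeta^{m}\overline f$, forcing $f=\sum_{k=0}^{m}a_k\zeta^k$ with $a_k=\overline{a_{m-k}}$ and hence $f(1)=\overline{f(1)}\in\R$; imposing $f(1)=0$ therefore cuts the dimension by exactly one, turning the unconstrained count $m+1$ into $m$. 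Summing over all blocks gives $\dim_\R\ker L=\sum_{j,l}\kappa^j_l=\kappa$.

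The main obstacle I anticipate is the bookkeeping of the $(1-\zeta)$ factor. One must check that the reductions by $B^\pm$ genuinely respect the norms on $\mathcal A^{1,\alpha}_{0}$ and $\mathcal C^{1,\alpha}_{0}$ and the constraint $f(1)=0$ (in particular that division by $1-\zeta$ stays within the $\mathcal C^{1,\alpha}$ scale), and that the automatic reality of $f(1)$ on the kernel is correctly quantified, so that the index shift is exactly one per block rather than two. This is precisely the point for which the constrained spaces were engineered, and it is the step that must be carried out with care.
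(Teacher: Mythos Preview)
The paper does not give its own proof of this result: Theorem~\ref{theoind} is quoted verbatim from \cite{be-de2} and used as a black box in the proof of Theorem~\ref{theodiscscons}. So there is no ``paper's proof'' to compare against, and your proposal should be judged on its own merits.

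Your strategy for (i) is sound. The block upper-triangular shape of $G$ does reduce surjectivity to surjectivity of each diagonal block operator, and the symmetric Birkhoff factorization together with the $(1-\zeta)$-bookkeeping correctly turns each block into a direct sum of constrained scalar problems, which are onto precisely when the corresponding partial index is $\ge 0$. The care you flag about $B^\pm$ preserving the constrained spaces is the right thing to check, and it goes through because $B^+(1)$ is invertible.

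For (ii), however, there is a genuine gap. You write that ``onto-ness of $L$ forces, by the same back-substitution, onto-ness of every diagonal block, hence $\kappa^j_l\ge 0$ for all $j,l$.'' Back-substitution only gives surjectivity of the \emph{last} diagonal block $L_r$; for earlier blocks the off-diagonal terms pollute the right-hand side, and in general $L$ can be onto while some $L_j$ is not. Concretely, take $r=2$ with a single scalar in each block, partial indices $\kappa^1=-1$ and $\kappa^2=+1$; then $L_0=L_1\oplus L_2$ has one-dimensional cokernel coming from $L_1$, but an appropriate off-diagonal term $L_{12}$ mapping $\ker L_2$ onto a complement of $\mathrm{Im}\,L_1$ makes the full operator $L$ onto even though $L_1$ is not. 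In that situation your deduction ``$\kappa^j_l\ge 0$'' fails, and you cannot conclude that every $L_t$ in your homotopy is onto.

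The fix is simple and in fact shortens the argument: drop the claim that each $L_j$ is onto, and instead use only stability of the Fredholm index. Each $L_t$ is Fredholm (the symbol $G_t$ stays invertible along the homotopy since it is block upper-triangular with invertible diagonal blocks), so $\mathrm{ind}\,L=\mathrm{ind}\,L_0=\sum_j\mathrm{ind}\,L_j$. Your scalar computation, valid for \emph{all} integers $m$ (kernel of dimension $\max(m,0)$, cokernel of dimension $\max(-m,0)$, hence index $m$), gives $\mathrm{ind}\,L_j=\sum_l\kappa^j_l$, whence $\mathrm{ind}\,L=\kappa$. Now the hypothesis ``$L$ onto'' in (ii) yields $\dim_\R\ker L=\mathrm{ind}\,L=\kappa$ directly, without ever needing $L_t$ to be onto for $t<1$ or the diagonal blocks to be individually surjective.
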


\begin{proof}[Proof of Theorem \ref{theodiscscons}]
The proof is basically a repetition of the one of Theorem \ref{theodiscs}. 
We consider the corresponding map between Banach spaces
$$F_0: (\mathcal{C}^4(\B))^{d} \times (\mathcal{A}^{1,\alpha}_{0})^{2n+2d} 
\to (\mathcal C^{1,\alpha}_0)^{2n+2d}$$
defined by 
\begin{equation}\label{eqF_0}
F_0(r,\bm{f}):=\tilde{r}\left(h,g,\tilde{h},\tilde{g}+\frac{\zeta}{2}c\right), 
\end{equation}
where $\bm{f}=(h,g,\tilde{h},\tilde{g}).$ 
Its differential $\partial_2 F_0(\rho,\bm{f_0}^*)$ is still of the form (\ref{eqrh0}) where  $G$ is given by (\ref{eqG}); here $\bm{f_0}^*$ denotes the disc
 \begin{equation}\label{discini}
 \bm{f_0}^*=\left((1-\zeta)V,2(1-\zeta)\transp \overline{V}A_1V,\ldots,2(1-\zeta)\transp \overline{V}A_dV,(1-\zeta)\transp \overline{V}A, 0\right).
 \end{equation}  The surjectivity and real dimension of its kernel are now insured by Theorem \ref{theoind}; more precisely, strictly following the notations of Theorem \ref{theoind}, we have $r=3$,  
$G_1(\zeta)=\frac{1}{2}I_d$ with $d$ indices all equal to zero,  $G_2(\zeta)$ given by (\ref{eqG_2first}) with $2n$ indices all equal to $1$ and $G_3(\zeta)=-i\zeta I_d$ with $d$ indices all equal to two. Therefore the 
 differential $\partial_2 F_0(\rho,\bm{f_0}^*)$ is surjective and the real dimension of its kernel is $\kappa=2n+2d$. 
 \end{proof}

\subsection{Filling an open set}\label{secfill} 
We now prove that in case the quadric $Q$ is fully non-degenerate at $0$, the family of stationary discs fills an open neighborhood of the origin. We point out that the fact that $Q$ satisfies condition $\fff$, namely that there exists $V\in \C^n$ such that ${\rm span}_{\C}\{A_1V, \ldots, A_dV\}$ is of dimension $d$, is essential here.
\begin{prop}\label{propcenteropen}
Let $Q\subset \C^{n+d}$ be a quadric submanifold of real codimension $d$ given by (\ref{eqquadric}), fully non-degenerate at $0$.  
Consider an initial disc $\bm{f_0}$ of the form 
 $$\bm{f_0}=\left((1-\zeta)V,2(1-\zeta)\transp \overline{V}A_1V,\ldots,2(1-\zeta)\transp \overline{V}A_dV,(1-\zeta)\transp \overline{V}A,\frac{\zeta}{2}c\right)$$ 
where $V$ is given by $\fff$ and $c=(c_1,\ldots,c_d)$ is chosen such that the matrix 
 $\sum_{j=1}^d c_jA_j$ is invertible. Then there exist  an open 
neighborhood $U$ of $\rho$ in $(\mathcal{C}^4(\B))^{d}$ 
and a positive $\varepsilon$ such that for all $r \in U$ the set  $$\{f(0) \ \ | \ {\bm f} \in \mathcal{S}_0(\{r=0\}) , \ 
\|\bm{f}-\bm{f_0}\|_{\mathcal A^{1,\alpha}_{0}}<\varepsilon \}$$ contains an open set of $\C^{n+d}$. 
\end{prop}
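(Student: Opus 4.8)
The plan is to identify the set of centers with the image of a finite-dimensional $\mathcal{C}^1$ map, and to show that this map is a local diffeomorphism at the initial disc, with condition $\fff$ entering decisively at the very last step.

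First I would use Theorem~\ref{theodiscscons} to parametrize, for each $r\in U$, the family $\mathcal{S}_0(\{r=0\})$ near $\bm{f_0}$ by the $\mathcal{C}^1$ map $\mathcal{F}_0(r,\cdot)\colon V\to\mathcal{A}$ (here $V\subset\R^{2n+2d}$ is the parameter domain of Theorem~\ref{theodiscscons}, not the vector $V\in\C^n$ furnished by $\fff$). Writing $\mathcal{F}_0(r,v)=(h_v,g_v,\tilde h_v,\tilde g_v)$ with base disc $f_v=(h_v,g_v)$, I set
$$E\colon U\times V\to\C^{n+d},\qquad E(r,v)=f_v(0).$$
This map is $\mathcal{C}^1$ and $E(\rho,0)=f_0(0)$. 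Since $\dim_\R V=2n+2d=\dim_\R\C^{n+d}$, it suffices to prove that $\partial_vE(\rho,0)$ is a linear isomorphism: the inverse function theorem then makes $E(\rho,\cdot)$ a local diffeomorphism onto a neighborhood of $f_0(0)$. As invertibility of $\partial_vE$ is an open condition and $E$ is jointly $\mathcal{C}^1$, the differential $\partial_vE(r,0)$ remains invertible for $r$ close to $\rho$, so after shrinking $U$ the set of centers contains an open set of $\C^{n+d}$ for every $r\in U$.

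The core is thus the invertibility of $\partial_vE(\rho,0)$. The proof of Theorem~\ref{theodiscscons} shows that $d_v\mathcal{F}_0(\rho,0)$ is an isomorphism onto the tangent space $T:=\ker\partial_2F_0(\rho,\bm{f_0}^*)$, of real dimension $2n+2d$, so $\partial_vE(\rho,0)$ is an isomorphism if and only if the evaluation map $\mathrm{ev}_0\colon T\to\C^{n+d}$, $\dot{\bm f}=(\dot h,\dot g,\dot{\tilde h},\dot{\tilde g})\mapsto(\dot h(0),\dot g(0))$, is one. To describe $T$ I would solve the linearized Riemann--Hilbert problem $2\Re e[\overline{G}\,\dot{\bm f}]=0$ under the pointwise constraints defining $\mathcal{A}$, using the block triangular form (\ref{eqG}) of $G$. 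A tangent vector should be governed by two independent pieces of data: a free variation $\dot h_0=\dot h(0)\in\C^n$ of the center in the $z$-directions, and a variation $\dot a\in\C^d$ of the Lagrange multiplier $c$, everything else being determined by $\dot h(1)=0$, $\dot g(1)=0$ and tangency to $Q$, with the vectors $A_jV$ entering through the invertible matrix $A$. Carrying the computation out should express the $w$-component of the center as $4\,\Re e(\transp\overline{V}A_j\dot h_0)+2\sum_k M_{jk}\,\overline{\dot a_k}$, where $M_{jk}:=\transp\overline{V}A_jA^{-1}A_kV$. Because the $z$-component of the center is simply $\dot h_0$, the map $\mathrm{ev}_0$ is an isomorphism exactly when the antilinear map $\dot a\mapsto M\overline{\dot a}$ is bijective, i.e. when the Hermitian $d\times d$ matrix $M$ is invertible.

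The hardest step, and the true content of the proposition, is therefore the invertibility of $M=\transp\overline{\Xi}\,A^{-1}\,\Xi$ with $\Xi=(A_1V,\dots,A_dV)$. Condition $\fff$ says exactly that $\Xi$ has rank $d$, and this is necessary: if $A_1V,\dots,A_dV$ were dependent for every $V$, then $M$ would be singular and the $w$-components of the centers could not fill. For sufficiency I would use $\fff$ together with Tumanov's condition $\ttt$ to choose the vector $V$ (satisfying $\fff$) and the constants $c$ (so that $A=\sum_jc_jA_j$ is invertible) in a coordinated way, so that the nondegenerate Hermitian form defined by $A^{-1}$ restricts nondegenerately to $\spanc\{A_1V,\dots,A_dV\}$; then $M$ is the Gram matrix of that restriction and is invertible. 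The delicate point is that $A^{-1}$ is in general indefinite, so a vector $V$ merely furnished by $\fff$ may be isotropic and has to be adjusted; showing that the admissible pairs $(V,c)$ can always be arranged so that $\det M\neq0$ is where the non-degeneracy hypotheses are genuinely consumed. Once $M$ is invertible, $\mathrm{ev}_0$ is an isomorphism and the proof concludes as in the first step.
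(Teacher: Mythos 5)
Your overall architecture coincides exactly with the paper's: parametrize $\mathcal{S}_0(\{r=0\})$ near $\bm{f_0}$ via Theorem \ref{theodiscscons}, consider the center map $\Psi_r({\bm f})=f(0)$, reduce to injectivity of its differential on the $(2n+2d)$-dimensional tangent space $T_{\bm{f_0}}\mathcal{S}_0(Q)$ (this is Lemma \ref{leminj} in the paper), solve the linearized Riemann--Hilbert problem block by block ($\tilde g_j=a_j-\overline{a_j}\zeta$, then $h$ affine in $\zeta$, then $g$), and finish with the inverse function theorem plus openness of invertibility in $r$. The genuine gap is at the step you yourself call the hardest: you reduce everything to the invertibility of $M=\transp\overline{D_2}\,A^{-1}D_2$ with $D_2=(A_1V\,\cdots\,A_dV)$, and then only gesture at it (``choose $(V,c)$ in a coordinated way''), with no argument supplied. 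As a proof of the proposition \emph{as stated} this cannot work even in principle: the statement quantifies over an arbitrary $V$ furnished by $\fff$ and an arbitrary $c$ making $A$ invertible, so you are not free to adjust them, and $M$ really can be singular for admissible data --- take $d=1$, $A_1=\mathrm{diag}(1,-1)$, $V=(1,1)$, $c=1$, where $\fff$ and $\ttt$ hold but $M=\transp\overline{V}A_1V=0$ (the initial disc is then entirely contained in $Q$). So your route, completed along the lines you sketch, would at best yield a version of the proposition for well-chosen pairs $(V,c)$, not the stated one.

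By contrast, the paper's own endgame contains no $A^{-1}$ at all: after imposing $X=0$, the remaining center condition is computed there as $\overline{B_1}Y=2\transp\overline{D_2}D_2\,\overline{a}$, and $\transp\overline{D_2}D_2$ is the plain Gram matrix of $A_1V,\ldots,A_dV$, positive definite \emph{exactly} when $\fff$ holds --- whence $a=0$ for every admissible $(V,c)$, no coordination needed, and no appeal to $\ttt$ beyond the construction of the discs. You should, however, scrutinize where the two computations part ways rather than simply defer to the paper: formula (\ref{eqh}) is derived in the variable $\transp\overline{A}h$ (the paper's announced ``abuse of notation'' used to reduce $G_2$ to $G_2^{\flat}$), while $B_1$ acts on the original $h$; pairing $\overline{B_1}$ directly with the coefficient $Y$ read off from (\ref{eqh}) is legitimate only if the substitution is undone, and undoing it produces precisely your factor $A^{-1}$. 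In the example above one can write down the full $(2n+2d)$-parameter family of stationary discs explicitly, and the differential of the center map does have a two-dimensional kernel there, consistent with your $M$ and not with $\transp\overline{D_2}D_2$. So your diagnosis of where the difficulty sits is sound and the discrepancy is substantive; but your proposal remains incomplete at its self-identified crux, and closing it requires either proving that admissible $(V,c)$ with $\det M\neq0$ always exist under $\fff$ and $\ttt$ (which is all the application to Theorem \ref{theojet} needs, since there one only uses \emph{some} good initial disc) or strengthening the hypothesis of the proposition accordingly.
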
  We will prove that there exist an open 
neighborhood $U$ of $\rho$ in $(\mathcal{C}^4(\B))^{d}$ 
and a positive $\varepsilon$ such that for all $r \in U$ the map 
$$\Psi_r:  \{{\bm f} \in \mathcal{S}_0(\{r=0\}) , \ \|\bm{f}-\bm{f_0}\|_{\mathcal A^{1,\alpha}_{0}}<\varepsilon\} \to \C^{n+d}$$ defined by
$$\Psi_r({\bm f})=f(0)$$ 
is a diffeomorphism onto its image. We consider the corresponding differential
$$d_{\bm {f_0}} \Psi_{\rho}: T_{\bm {f_0}}\mathcal{S}_0(Q) \to \C^{n+d}$$ 
with $d_{\bm {f_0}} \Psi_{\rho}({\bm f})=f(0)$. Recall that according to  Theorem \ref{theodiscscons}, the tangent space 
$T_{\bm {f_0}}\mathcal S_0(Q)$ is the kernel of the differential $\partial_2 F_0(\rho,\bm{f_0}^*)= 2\Re e \left[\overline{G(\zeta)}\ \cdot \right]$ where $F_0$, $\bm{f_0}^*$ and 
$G$ are respectively defined in (\ref{eqF_0}), (\ref{discini}) and (\ref{eqG}), and is of dimension $2n+2d$. Proposition \ref{propcenteropen} is then a consequence of the following lemma. 
\begin{lemma}\label{leminj}
The linear map $d_{\bm {f_0}} \Psi_{\rho}: T_{\bm {f_0}}\mathcal{S}_0(Q) \to \C^{n+d}$ is injective.
\end{lemma}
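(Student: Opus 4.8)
The plan is to prove directly that $d_{\bm{f_0}}\Psi_\rho$ has trivial kernel. By Theorem \ref{theodiscscons} the tangent space $T_{\bm{f_0}}\mathcal{S}_0(Q)=\ker\partial_2 F_0(\rho,\bm{f_0}^*)$ has real dimension $2n+2d$, which is exactly the real dimension of the target $\C^{n+d}$; hence injectivity of the linear map $d_{\bm{f_0}}\Psi_\rho(\bm{f})=f(0)$ will automatically make it an isomorphism, so that $\Psi_\rho$ is a local diffeomorphism at $\bm{f_0}$ and its image contains an open set, whence Proposition \ref{propcenteropen} (the same holding for $r$ near $\rho$ by continuity). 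So I fix $\bm{f}=(h,g,\tilde h,\tilde g)$ in the kernel of $\partial_2 F_0(\rho,\bm{f_0}^*)=2\,\Re e\!\left[\overline{G(\zeta)}\;\cdot\;\right]$ and assume $f(0)=(h(0),g(0))=0$, aiming to conclude $\bm{f}=0$.

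First I would solve the linear Riemann--Hilbert problem $2\,\Re e[\overline{G}\bm{f}]=0$ on $\partial\Delta$ explicitly, exploiting that $G$ is block upper triangular (\ref{eqG}) and reading the system from the bottom. The $\tilde g$--block $-i\zeta I_d$ decouples and, together with $\bm{f}\in(\mathcal{A}^{1,\alpha}_{0})^{2n+2d}$, forces each $\tilde g_j$ to be an explicit quadratic polynomial governed by one complex parameter; collecting these yields $\delta a\in\C^d$. Substituting $\tilde g$ into the middle block $G_2$ (\ref{eqG_2first}) gives an inhomogeneous problem whose solution has $h$ a polynomial of degree at most two, $h(\zeta)=\delta V'+\zeta\delta W'+\zeta^2\delta U'$: its free parameters are the value $\delta V'=h(0)\in\C^n$ (with $\delta W'$ tied to it through $h(1)=0$, i.e. $\delta W'=-\delta V'-\delta U'$), while holomorphicity of the lift component $\tilde h$ forces the quadratic coefficient to satisfy $A\,\delta U'=\sum_{k=1}^d \delta a_k\,A_k V$, where $A=\sum_j c_j A_j$ is invertible by Tumanov's condition $\ttt$. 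The top block $\frac{1}{2} I_d$ then determines $g$ from $h$ through the linearized attachment relation $\Re e\,g_j=\transp\overline{\delta h}\,A_j h_0+\transp\overline{h_0}\,A_j\,\delta h$ (with $h_0=(1-\zeta)V$) together with the normalization $g(1)=0$. This parametrizes the kernel by $(\delta V',\delta a)\in\C^n\times\C^d$, matching the dimension $2n+2d$.

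Next I would evaluate at $\zeta=0$. Since $h(0)=\delta V'$, the assumption $h(0)=0$ gives $\delta V'=0$, whence $\delta W'=-\delta U'$ and $h(\zeta)=-\zeta(1-\zeta)\delta U'$. A short computation from the attachment relation above, imposing $g(1)=0$, then yields $g_j(0)=2\,\overline{\transp\overline{\delta U'}A_j V}$. Thus $g(0)=0$ is equivalent to $\transp\overline{\delta U'}A_j V=0$ for all $j$. Setting $u:=A\,\delta U'=\sum_k \delta a_k A_k V$, these two facts say precisely that $u\in U:=\spanc_{\C}\{A_1 V,\dots,A_d V\}$ while $(A_j V)^*A^{-1}u=0$ for every $j$; equivalently $T\,\overline{\delta a}=0$ for the Hermitian Gram matrix $T=(T_{jk})$ with $T_{jk}=(A_k V)^*A^{-1}(A_j V)$.

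The crux, and the step I expect to be the main obstacle, is the concluding linear algebra: showing that $T$ is invertible, equivalently that the non-degenerate Hermitian form $B(x,y)=y^*A^{-1}x$ restricts to a non-degenerate form on the $d$-dimensional space $U$. This is exactly where full non-degeneracy enters: condition $\fff$ guarantees $\dim_{\C}U=d$, so that $A_1 V,\dots,A_d V$ is a genuine basis of $U$, and condition $\ttt$ supplies the invertible $A$. Since $A$ is only invertible and not definite, $B|_U$ can genuinely degenerate for a badly chosen $V$ (an isotropy phenomenon), so the argument requires selecting the vector $V$ furnished by $\fff$ so that $B|_U$ is non-degenerate; such a choice is available because the vectors $V$ satisfying $\fff$ form a dense open set on which $\det T$ does not vanish identically, and a single good initial disc suffices for the filling statement. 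Granting this, $T\,\overline{\delta a}=0$ forces $\delta a=0$, hence $u=0$ and $\delta U'=A^{-1}u=0$; together with $\delta V'=0$ this gives $\bm{f}=0$. Therefore $d_{\bm{f_0}}\Psi_\rho$ is injective, proving Lemma \ref{leminj}.
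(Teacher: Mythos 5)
Your proposal follows the same route as the paper's own proof (read the block-triangular system (\ref{eqGprecise}) from the bottom: the last $d$ rows give $\tilde g_j=a_j-\overline{a_j}\zeta$, the middle $2n$ rows give $h$ polynomial with top coefficient tied to $\sum_j a_jA_jV$ through the invertible matrix $A$, the first $d$ rows determine $g$, then evaluate at $\zeta=0$), but your bookkeeping at the middle block is more careful than the paper's, and this matters. The paper solves the middle block ``with the abuse of notation $h=\transp\overline{A}h$'', so its formula (\ref{eqh}) is a formula for $Ah$, not for $h$; when it then feeds $h=X+Y\zeta$ into the first $d$ rows, where $B_1$ acts on the \emph{original} $h$, the substitution is never undone, and the factor $A^{-1}$ disappears. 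That is how the paper lands on the positive definite Gram matrix $D_1D_2=\transp\overline{D_2}D_2$ and concludes $a=0$ with no further choice of $V$. Redoing the substitution correctly, exactly as you do, one gets your matrix $T=\bigl((A_jV)^*A^{-1}(A_kV)\bigr)_{j,k}$, and your computation, not the paper's, is the right one. Indeed $T$ can be singular under $\fff$ and $\ttt$: take $d=1$, $n=2$, $A_1=\mathrm{diag}(1,-1)$, $c=1$, $V=(1,1)$ (which satisfies $\fff$, since $A_1V\neq0$); then $T=\transp\overline{V}A_1V=0$, and one can check directly that taking $a\in\C\setminus\{0\}$, the disc with components $(1-\zeta)h=(1-\zeta)(-2\bar a\zeta)V$, $g=0$, $\tilde g$-part $(1-\zeta)(a-\bar a\zeta)$, and a suitable $\tilde h$, is a nonzero element of $T_{\bm{f_0}}\mathcal{S}_0(Q)$ with $f(0)=0$. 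This is corroborated by the explicit stationary discs $h=\frac{1-\zeta}{1+\gamma\zeta}\,U$ of the hyperquadric, whose centers $\bigl(U,(\transp\overline{U}A_1U)\Phi(\gamma)\bigr)$ visibly cannot fill an open set when $\transp\overline{V}A_1V=0$. So Lemma \ref{leminj}, read for an arbitrary $V$ furnished by $\fff$, actually fails, and your suspicion at what you call the crux step is fully justified.

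That said, your own patch is where your argument has a genuine gap. You assert that a good $V$ exists because ``the vectors $V$ satisfying $\fff$ form a dense open set on which $\det T$ does not vanish identically'' --- but that non-vanishing is precisely what must be proved, and it does not follow from $\fff$ and $\ttt$ as stated. Since $\det T(V,c)$ is real-analytic, a single pair $(V,c)$ with $T$ invertible (one may need to vary $c$, hence $A$, as well as $V$) would suffice, and for $d=1$ polarization does produce $V$ with $\transp\overline{V}AV\neq0$; but for general $d$ you give no construction, and it is not clear that $\fff$ together with $\ttt$ always permits one. Moreover, because Proposition \ref{propcenteropen}, Proposition \ref{propjetdiscs} and the proof of Theorem \ref{theojet} all invoke this lemma for an initial disc built from an arbitrary $V$ given by $\fff$, any repair must be propagated: the invertibility of $T$ for some admissible $(V,c)$ has to be added to, or built into, the non-degeneracy hypothesis, with the initial disc chosen accordingly. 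In summary: your proposal correctly identifies and fixes a real slip in the paper's printed computation, but it does not complete the proof, since the existence of a $V$ (and $c$) making $T$ non-degenerate is asserted rather than established.
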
 
\begin{proof}We follow strictly the notations introduced in the proof of Theorem \ref{theodiscs}. 
Let $\bm{f}=(1-\zeta)(h,g,\tilde{h},\tilde{g})$ be an element of $T_{\bm {f_0}}\mathcal{S}_0(Q)$ in the kernel of $d_{\bm {f_0}} \Psi_{\rho}$, that is, satisfying $f(0)=0$. Since the tangent space 
$T_{\bm {f_0}}\mathcal S_0(Q)$ is the kernel of the differential $\partial_2 F_0(\rho,\bm{f_0}^*)= 2\Re e \left[\overline{G(\zeta)}\ \cdot \right]$, we have
\begin{equation}\label{eqker2}
\overline{G(\zeta)}\bm{f}+G(\zeta)\overline{\bm{f}}=0,
\end{equation}
where $G$ is of the form (\ref{eqG}); more explicitly $G$ is of the form 
\begin{equation}\label{eqGprecise}
G(\zeta)=\left(\begin{array}{cccccccccccc}
\frac{1}{2}I_d & B(\zeta)& 0\\
 0& G_2(\zeta) &  C(\zeta)\\
 0 & 0 &  -i\zeta I_d\\
\end{array}\right), \zeta \in \partial \Delta,
\end{equation}
where $I_d$ denotes the identity matrix of size $d$, $B(\zeta)$ is the following  matrix of size $d\times 2n$
\begin{equation}\label{eqB}
B=\left(\begin{matrix}
-(A_1)^1 h_0(\zeta)& \hdots  &-(A_1)^n h_0(\zeta) & 0  & \hdots     &  0 \\
\vdots &  & \vdots &  \vdots & & \vdots \\

-(A_d)^1 h_0(\zeta)& \hdots  &-(A_d)^n h_0(\zeta) & 0  & \hdots     &  0 \\

\end{matrix}\right)=(1-\zeta)B_1,
\end{equation}
and $C(\zeta)$ is the following  matrix of size $2n \times d$ 
$$C=\left(\begin{matrix}
2 \transp h_0 \overline{(A_1)_1} & \hdots  & 2 \transp h_0 \overline{(A_d)_1} \\
\vdots &  & \vdots &    \\

2 \transp h_0 \overline{(A_1)_n} & \hdots  & 2 \transp h_0 \overline{(A_d)_n} \\
-2i \transp h_0 \overline{(A_1)_1} & \hdots  & -2i \transp h_0 \overline{(A_d)_1} \\
\vdots &  & \vdots &    \\

-2i \transp h_0 \overline{(A_1)_n} & \hdots  & -2i \transp h_0 \overline{(A_d)_n} \\

\end{matrix}\right)$$
where we recall that  $(A_j)_l$ denotes, for $j=1,\dots,d$ and $l=1,\ldots,n$, the $l^{\rm th}$ column of $A_j$.
The last $d$ rows of  Equation (\ref{eqker2}) are of the form 
$$i\overline{\zeta} (1-\zeta)\tilde{g_j}(\zeta)-i\zeta \overline{(1-\zeta)\tilde{g_j}(\zeta)}=0,  \ j=1,\ldots,n,$$
 from which it follows that 
$$\tilde{g_j}=a_j-\overline{a_j}\zeta, \ a_j \in \C.$$
Solving the system backward, 
the previous $2n$ rows of Equation (\ref{eqker2}) are of the form
 \begin{equation*}
\overline{G_2(\zeta)}\left(\begin{matrix}
(1-\zeta)h  \\
(1-\zeta) \tilde{h}\\
\end{matrix}\right) 
+G_2(\zeta)\left(\begin{matrix}
\overline{(1-\zeta)h}  \\
 \overline{(1-\zeta)\tilde{h}}\\
\end{matrix}\right) + \overline{C(\zeta)} (1-\zeta)\tilde{g}+C(\zeta)\overline{(1-\zeta)\tilde{g}}=0.
\end{equation*}
Following the operations that lead to consider $G_2^{\flat}$ (see Equation (\ref{eqG_2})), we obtain 
 $n$ systems 
 \begin{equation*}
\overline{R(\zeta)}
\left(\begin{matrix}
(1-\zeta)h_k  \\
(1-\zeta) \tilde{h_k}\\
\end{matrix}\right) 
+R(\zeta)\left(\begin{matrix}
\overline{(1-\zeta)h_k}  \\
 \overline{(1-\zeta)\tilde{h_k}}\\
\end{matrix}\right) + \overline{C_k(\zeta)} (1-\zeta)\tilde{g}+C_k(\zeta)\overline{(1-\zeta)\tilde{g}} =0
\end{equation*}
with the abuse of notation $h=\transp \overline{A}h$ and where
$$C_k=\left(\begin{matrix}
2 \transp h_0 \overline{(A_1)_k} & \hdots  & 2 \transp h_0 \overline{(A_d)_k} \\
-2i \transp h_0 \overline{(A_1)_k} & \hdots  & -2i \transp h_0 \overline{(A_d)_k} \\
\end{matrix}\right).$$
Equivalently, we have  
\begin{eqnarray*}
\left(\begin{matrix}
(1-\zeta)h_k  \\
 (1-\zeta)\tilde{h_k}\\
\end{matrix}\right) 
&=&-\overline{R(\zeta)^{-1}}R(\zeta)\left(\begin{matrix}
\overline{(1-\zeta)h_k}  \\
 \overline{(1-\zeta)\tilde{h_k}}\\
\end{matrix}\right) -\overline{R(\zeta)^{-1}}\left(\overline{C_k(\zeta)} (1-\zeta)\tilde{g}+C_k(\zeta)\overline{(1-\zeta)\tilde{g}}\right)\\
\\
&=& -\left(\begin{matrix}
0 & \zeta       \\
\zeta & 0  \\
\end{matrix}\right)\left(\begin{matrix}
\overline{(1-\zeta)h_k}  \\
 \overline{(1-\zeta)\tilde{h_k}}\\
\end{matrix}\right) -\overline{R(\zeta)^{-1}}\left(\overline{C_k(\zeta)} (1-\zeta)\tilde{g}+C_k(\zeta)\overline{(1-\zeta)\tilde{g}}\right).
\end{eqnarray*}
Recall that  
\begin{equation*}
-\left(\begin{matrix}
0 & \zeta       \\
\zeta & 0  \\
\end{matrix}\right) = \Theta^{-1} \left(\begin{matrix}
\zeta & 0       \\
0 & \zeta  \\
\end{matrix}\right) \overline \Theta
\end{equation*}
with  
\begin{equation*}
\Theta= \left(\begin{matrix}
1 & -1       \\
i & i  \\
\end{matrix}\right).
\end{equation*}
 Therefore 
$$\Theta  \left(\begin{matrix}
(1-\zeta)h_k  \\
(1-\zeta)\tilde{h_k}\\
\end{matrix}\right) =\left(\begin{matrix}
\zeta & 0       \\
0 & \zeta  \\
\end{matrix}\right) \overline{\Theta \left(\begin{matrix}
(1-\zeta)h_k  \\
(1-\zeta)\tilde{h_k}\\
\end{matrix}\right) }-\Theta\overline{R(\zeta)^{-1}}\left(\overline{C_k(\zeta)} (1-\zeta)\tilde{g}+C_k(\zeta)\overline{(1-\zeta)\tilde{g}}\right).$$
Note that 
$$
\Theta\overline{R(\zeta)^{-1}}=\frac{1}{2}\left(\begin{matrix}
-(1-\zeta) & i(1+\zeta)       \\
i(1+\zeta) & 1-\zeta \\
\end{matrix}\right).
$$
Setting
$
\left(\begin{matrix}
u_k  \\
\tilde{u_k}\end{matrix}\right)
=
\Theta \left(\begin{matrix}
h_k  \\
\tilde{h_k}\\
\end{matrix}\right)$, we have
$$\left(\begin{matrix}
(1-\zeta)u_k  \\
(1-\zeta)\tilde{u_k}\end{matrix}\right)
 =\left(\begin{matrix}
\zeta & 0       \\
0 & \zeta  \\
\end{matrix}\right)\left(\begin{matrix}
 \overline{(1-\zeta)u_k}  \\
 \overline{(1-\zeta)\tilde{u_k}}\end{matrix}\right)
 -\Theta\overline{R(\zeta)^{-1}}\left(\overline{C_k(\zeta)} (1-\zeta)\tilde{g}+C_k(\zeta)\overline{(1-\zeta)\tilde{g}}\right).$$
and therefore, dividing by $1-\zeta$,

$$\left(\begin{matrix}
u_k  \\
\tilde{u_k}\end{matrix}\right)
 =-\left(\begin{matrix}
 \overline{u_k}  \\
 \overline{\tilde{u_k}}\end{matrix}\right)
 -\Theta\overline{R(\zeta)^{-1}}\left(\overline{C_k(\zeta)} \tilde{g}-C_k(\zeta)\overline{\zeta\tilde{g}}\right),$$
 that is 
$$\left(\begin{matrix}
u_k +\overline{u_k} \\
\tilde{u_k}+\overline{\tilde{u_k}}\end{matrix}\right)
 =
 -\Theta\overline{R(\zeta)^{-1}}\left(\overline{C_k(\zeta)} \tilde{g}-C_k(\zeta)\overline{\zeta\tilde{g}}\right).$$
A direct computation shows that 
$$\overline{C_k(\zeta)} \tilde{g}-C_k(\zeta)\overline{\zeta\tilde{g}}=\left(\begin{matrix}
v_k(\zeta)-\overline{\zeta v_k(\zeta)}  \\
i(v_k(\zeta)+\overline{\zeta v_k(\zeta)})\end{matrix}\right)$$
where 
$$v_k(\zeta)=2 \sum_{j=1}^d\transp \overline{h_0}(A_j)_k \tilde{g_{j}}=2 \sum_{j=1}^d\transp \overline{V}(A_j)_k (1-\overline{\zeta})(a_j-\overline{a_j}\zeta),$$
and 
$$-\Theta\overline{R(\zeta)^{-1}}\left(\overline{C_k(\zeta)} \tilde{g}-C_k(\zeta)\overline{\zeta\tilde{g}}\right)=
\left(\begin{matrix}
v_k(\zeta)+ \overline{v_k(\zeta)} \\
-i\left(v_k(\zeta)-\overline{v_k(\zeta)}\right)\end{matrix}\right).$$
Therefore 
\begin{equation*}
\begin{cases}
u_k +\overline{u_k} =v_k(\zeta)+ \overline{v_k(\zeta)}\\
\tilde{u_k}+\overline{\tilde{u_k}}=-i\left(v_k(\zeta)-\overline{v_k(\zeta)}\right)
\end{cases}
\end{equation*}
from which it follows that 
\begin{equation*}
\begin{cases}
u_k=  4 \sum_{j=1}^d\Re e (\transp \overline{V}(A_j)_k)\Re e (a_j)+iy_k -4\sum_{j=1}^d \Re e (\transp \overline{V}(A_j)_k)\overline{a_j}\zeta\\
\tilde{u_k}=4 \sum_{j=1}^d\Im m  (\transp \overline{V}(A_j)_k)\Re e (a_j) +i\tilde{y_k}-4\sum_{j=1}^d \Im m (\transp \overline{V}(A_j)_k)\overline{a_j}\zeta\\
\end{cases}
\end{equation*}
where $y_k, \tilde{y_k} \in \R$,  
and thus, since $h_k=1/2(u_k-i\tilde{u_k}),$ 
\begin{equation}\label{eqh}
h_k(\zeta)= 2 \sum_{j=1}^d\transp V\overline{(A_j)_k}\Re e (a_j)+\frac{\tilde{y_k}}{2}+i\frac{y_k}{2} -2\sum_{j=1}^d \transp V\overline{(A_j)_k}\overline{a_j}\zeta.
\end{equation}
We write $h=X+Y\zeta$.  
Now, the first $d$ rows of Equation (\ref{eqker2}) give rise to 
 \begin{eqnarray*}
\frac{1}{2}((1-\zeta)g+\overline{(1-\zeta)g})&=&- \overline{B(\zeta)}(1-\zeta)h- B(\zeta)\overline{(1-\zeta)h}
\end{eqnarray*}
and so 
 \begin{eqnarray*}
\frac{1}{2}(g-\overline{\zeta g})&=&- \overline{B(\zeta)}h+ B(\zeta)\overline{\zeta h}\\
&=&-(1-\overline{\zeta})(\overline{B_1}h+ B_1\overline{h}).
\end{eqnarray*}
Recall that $B$ and $B_1$ are defined in (\ref{eqB}); we use the inconsequential abuse of notation $B=(B,0)$ since $B$  just acts on the $h$ component of the discs.
This implies directly that  
\begin{equation}\label{eqg}
g(\zeta)= -4\Re e \left(\overline{B_1}X\right)+2\overline{B_1}Y-2\overline{B_1}Y\zeta.\\
\end{equation}
Now since $h(0)=0$ and $g(0)=0$ we have from (\ref{eqh}) and (\ref{eqg})
 \begin{eqnarray*}
\begin{cases}
X=0\\
\overline{B_1}Y=0.
\end{cases}
\end{eqnarray*}  
We will prove using the second set of equations, namely $\overline{B_1}Y=0$, that $a_1=\ldots=a_d=0$. We have 
\begin{eqnarray*}
\overline{B_1}Y&=&2\left(\begin{matrix}
\overline{(A_1)^1 V}& \hdots  &\overline{(A_1)^n V}  \\
\vdots &  & \vdots  \\

\overline{(A_d)^1 V}& \hdots  &\overline{(A_d)^n V}  \\

\end{matrix}\right)\left(\begin{matrix}
\sum_{j=1}^d \transp V\overline{(A_j)_1}\overline{a_j}  \\
\vdots  \\
\sum_{j=1}^d \transp V\overline{(A_j)_n}\overline{a_j}  \\
\end{matrix}\right)\\
& =&2 \underbrace{\left(\begin{matrix}
\overline{(A_1)^1 V}& \hdots  &\overline{(A_1)^n V}  \\
\vdots &  & \vdots  \\
\overline{(A_d)^1 V}& \hdots  &\overline{(A_d)^n V}  \\
\end{matrix}\right)}_{D_1}\underbrace{\left(\begin{matrix}
 \transp V\overline{(A_1)_1} & \hdots  & \transp V\overline{(A_d)_1} \\
\vdots &  & \vdots  \\
 \transp V\overline{(A_1)_n} & \hdots  & \transp V\overline{(A_d)_n}  \\
\end{matrix}\right)}_{D_2}
\left(\begin{matrix}
\overline{a_1}  \\
\vdots  \\

\overline{a_d} \\
\end{matrix}\right).\\
\\
\end{eqnarray*}
Recall that we have assumed $Q$ to satisfy conditon $\fff$ and thus $d\leq n$; in case $d> n$, $D_1$ and $D_2$ have rank less than $d-1$ and thus  the $d\times d$ matrix 
$D_1D_2$  cannot be invertible. In such case,  the linear map $d_{\bm f} \Psi_{\rho}: T_{\bm {f_0}}\mathcal{S}_0(Q) \to \C^{n+d}$ is not injective; note also that this will also be the case if $A_1,\ldots,A_d$ are not linearly independent.     
Note that  $D_1=\transp \overline{D_2}$. Since $V$ is given by condition $\fff$, it follows that $D_1$ is of rank $d$ and therefore 
$D_1D_2=\transp\overline{D_2}D_2$ is positive definite. It follows that $a_1=\ldots=a_d=0$. Since $X=0$, we also have $y_1=\ldots=y_d=\tilde{y_1}=\ldots=\tilde{y_d}=0$. This proves that $d_{\bm f} \Psi_{\rho}: T_{\bm {f_0}}\mathcal{S}_0(Q) \to \C^{n+d}$ is injective.
\end{proof}
\begin{remark}
Proposition \ref{propcenteropen} provides an open set $O \subset \C^{n+d}$ such that 
 $$O\subset \{f(0) \ \ | \ {\bm f} \in \mathcal{S}_0(\{r_t=0\}) , \ 
\|\bm{f}-\bm{f_0}\|_{\mathcal A^{1,\alpha}_{0}}<\varepsilon \}.$$
It follows directly from the proof that for any point $q \in O$, there exists an unique lift of stationary disc ${\bm f}$
such that $f(0)=q$.     
\end{remark}

\subsection{Injectivity of the jet map} 
Consider the linear jet map 
$$\mathfrak j_{1}: \left(\mathcal{A}^{1,\alpha}\right)^{2n+2d}  \to \mathbb C^{2(2n+2d)}$$ 
mapping ${\bm f}$ to its $1$-jet at $\zeta=1$, namely 
$$\mathfrak j_{1}({\bm f})=\left( {\bm f}(1), \displaystyle \frac{\partial {\bm f}}{\partial \zeta}(1)\right )\in \mathbb C^{2(2n+2d)}.$$
\begin{prop}\label{propjetdiscs}
Let $Q\subset \C^{n+d}$ be a quadric submanifold of real codimension $d$ given by (\ref{eqquadric}), fully non-degenerate at $0$.  Consider an initial disc $\bm{f_0}$ of the form 
 $$\bm{f_0}=\left((1-\zeta)V,2(1-\zeta)\transp \overline{V}A_1V,\ldots,2(1-\zeta)\transp \overline{V}A_dV,(1-\zeta)\transp \overline{V}A,\frac{\zeta}{2}c\right)$$ 
where $V$ is given by $\fff$ and $c=(c_1,\ldots,c_d)$ is chosen such that the matrix 
 $\sum_{j=1}^d c_jA_j$ is invertible. 
Then there exist an open
neighborhood $U$ of $\rho$ in $(\mathcal{C}^4(\B))^{d}$ and a positive $\varepsilon$ such that for all $r\in U$ the map  $\mathfrak j_{1}$ is injective on $\{\bm{f} \in \mathcal{S}_0(\{r=0\})\ \ | \ \|\bm{f}-\bm{f_0}\|_{1,\alpha}<\varepsilon\}$; in other words, such discs are determined by their $1$-jet at $1$.
\end{prop}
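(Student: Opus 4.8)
The plan is to follow the strategy of Lemma \ref{leminj}, exploiting that $\mathfrak j_1$ is linear and therefore equal to its own differential. First I reduce the assertion to the injectivity of $\mathfrak j_1$ on the tangent space $T_{\bm f_0}\mathcal S_0(Q)=\ker\partial_2F_0(\rho,\bm f_0^*)$. Indeed, composing the parametrization $\mathcal F_0(r,\cdot)$ of Theorem \ref{theodiscscons} with $\mathfrak j_1$ yields a $\mathcal C^1$ map $(r,v)\mapsto \mathfrak j_1(\mathcal F_0(r,v))$ whose $v$-differential at $(\rho,0)$ is $\mathfrak j_1$ precomposed with the isomorphism $d_v\mathcal F_0(\rho,0)\colon\R^{2n+2d}\to T_{\bm f_0}\mathcal S_0(Q)$. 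Once this differential is shown to be injective, the map $(r,v)\mapsto(r,\mathfrak j_1(\mathcal F_0(r,v)))$ is an immersion at $(\rho,0)$, hence locally injective; since it preserves $r$, each slice $v\mapsto\mathfrak j_1(\mathcal F_0(r,v))$ is injective for $r$ in a neighborhood $U$ of $\rho$ and $v$ near $0$, i.e. for $\|\bm f-\bm f_0\|_{1,\alpha}<\varepsilon$, which is the claim.

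It remains to prove that $\mathfrak j_1$ is injective on $T_{\bm f_0}\mathcal S_0(Q)$. I reuse the description of this kernel from the proof of Lemma \ref{leminj}: an element has the form $\bm f=(1-\zeta)(h,g,\tilde h,\tilde g)$ with $\tilde g_j=a_j-\overline{a_j}\zeta$ ($a_j\in\C$), with $h=X+Y\zeta$ given by (\ref{eqh}), and with $g$ given by (\ref{eqg}). Because of the factor $1-\zeta$ one has $\bm f(1)=0$ automatically, while $\bm f'(1)=-(h,g,\tilde h,\tilde g)(1)$; hence $\mathfrak j_1(\bm f)=0$ is equivalent to the four pointwise conditions $h(1)=g(1)=\tilde h(1)=\tilde g(1)=0$.

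I impose these in a convenient order. First, $\tilde g(1)=0$ reads $a_j-\overline{a_j}=0$, so every $a_j$ is real. Next, evaluating (\ref{eqh}) at $\zeta=1$ the $a_j$-terms cancel and leave $h_k(1)=\tilde y_k/2+iy_k/2$, so $h(1)=0$ forces $y_k=\tilde y_k=0$ for all $k$. With these vanishings a direct check shows $\tilde h(1)=0$ holds automatically, yielding no new information. The only remaining condition is $g(1)=0$, which by (\ref{eqg}) reads $\Re e(\overline{B_1}X)=0$; substituting $X=2D_2\,a$ with $a=(a_1,\dots,a_d)$ and $\overline{B_1}=-D_1=-\transp\overline{D_2}$, it becomes $\Re e(\transp\overline{D_2}D_2\,a)=0$, where $D_1,D_2$ are the matrices of Lemma \ref{leminj}.

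The heart of the argument, and the step that differs from Lemma \ref{leminj}, is to conclude $a=0$ from this relation. A short computation identifies the $j$-th column of $D_2$ with $A_jV$, so condition $\fff$ says precisely that the columns of $D_2$ are linearly independent; thus $D_2$ has rank $d$ and $H:=\transp\overline{D_2}D_2$ is Hermitian positive definite. Since $\tilde g(1)=0$ has already forced $a$ to be a real vector, for such $a$ one has $\transp a\,\Re e(H)\,a=\transp a\,H\,a>0$ whenever $a\neq 0$ (the antisymmetric imaginary part of $H$ contributes nothing to a real quadratic form), so $\Re e(H)$ is real symmetric positive definite, in particular invertible. Hence $\Re e(H)a=\Re e(Ha)=0$ gives $a=0$, and together with $y_k=\tilde y_k=0$ this yields $\bm f=0$. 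The main obstacle is exactly this final definiteness step: unlike in Lemma \ref{leminj}, where the complex relation $\transp\overline{D_2}D_2\,\overline a=0$ was available, the jet constraint only produces the real part $\Re e(\transp\overline{D_2}D_2\,a)$, so one must use the reality of $a$ imposed by $\tilde g(1)=0$ to recover invertibility from positive definiteness.
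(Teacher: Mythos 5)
Your proof is correct and follows essentially the same route as the paper's: reduce via the parametrization of Theorem \ref{theodiscscons} to injectivity of $\mathfrak j_{1}$ on the tangent space $T_{\bm {f_0}}\mathcal{S}_0(Q)$, reuse the kernel description $(a_j, X+Y\zeta, \overline{B_1})$ from Lemma \ref{leminj}, and conclude $a=0$ from the positive definiteness of $\transp\overline{D_2}D_2$ guaranteed by $\fff$. If anything, your write-up makes explicit a step the paper compresses (its terse assertion ``$\overline{B_1}X=-\transp\overline{D_2}D_2\,a \in \R$''): since $\tilde{g}(1)=0$ forces $a$ to be real, the constraint only gives $\Re e\left(\transp\overline{D_2}D_2\,a\right)=0$, and one must observe, as you do, that the real part of a Hermitian positive definite matrix is positive definite as a real quadratic form to recover $a=0$.
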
 

Note that by the implicit function theorem, it is enough to prove that the restriction of $\mathfrak j_{1}$ to the tangent space $T_{\bm {f_0}}\mathcal{S}_0(Q)$ of $\mathcal{S}_0(Q)$ at the point $\bm {f_0}$ is injective. Moreover, the tangent space 
$T_{\bm {f_0}}\mathcal S_0(Q)$ is the kernel of the differential $\partial_2 F_0(\rho,\bm{f_0}^*)= 2\Re e \left[\overline{G(\zeta)}\ \cdot \right]$ where $F_0$, $\bm{f_0}^*$ and 
$G$ are respectively defined in (\ref{eqF_0}), (\ref{discini}) and (\ref{eqG}). Proposition \ref{propjetdiscs} is then a consequence of the following lemma. 
\begin{lemma}
The restriction of $\mathfrak j_{1}$ to the kernel of $2\Re e \left[\overline{G(\zeta)}\  \cdot \right]$ is injective.
\end{lemma}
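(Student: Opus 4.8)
The plan is to reuse the explicit parametrization of the tangent space $T_{\bm{f_0}}\mathcal{S}_0(Q)$ obtained in the proof of Lemma \ref{leminj}. Recall that an element $\bm{f}=(1-\zeta)(h,g,\tilde h,\tilde g)$ of this kernel is entirely determined by the complex parameters $a_1,\ldots,a_d$ (through $\tilde g_j=a_j-\overline{a_j}\zeta$) together with the real constants $y_k,\tilde y_k$ appearing in Equation (\ref{eqh}). In particular, $h_k$ is affine in $\zeta$ of the form $h_k(\zeta)=X_k+Y_k\zeta$ with
$$X_k=2\sum_{j=1}^d\transp V\overline{(A_j)_k}\,\Re e(a_j)+\frac{\tilde y_k}{2}+i\frac{y_k}{2},\qquad Y_k=-2\sum_{j=1}^d\transp V\overline{(A_j)_k}\,\overline{a_j},$$
and $g$ is given by (\ref{eqg}). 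So the first step is simply to record that the kernel is the $(2n+2d)$-dimensional space coordinatized by $(a_1,\ldots,a_d,y_1,\ldots,y_n,\tilde y_1,\ldots,\tilde y_n)$.

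Next I would compute $\mathfrak j_1(\bm{f})$ explicitly in terms of these coordinates and show that $\mathfrak j_1(\bm{f})=0$ forces all of them to vanish. Since each component of $\bm{f}$ is of the form $(1-\zeta)\,p(\zeta)$ with $p$ affine (degree at most one in $\zeta$), knowing the $1$-jet at $\zeta=1$ of such a function is very restrictive: for $\phi(\zeta)=(1-\zeta)(s+t\zeta)$ one has $\phi(1)=0$ automatically, while $\phi'(1)=-(s+t)-t\cdot 0\cdots$, more precisely $\phi'(\zeta)=-(s+t\zeta)+(1-\zeta)t$, so $\phi'(1)=-(s+t)$. Hence the $1$-jet at $1$ of the $h$-component recovers exactly $-(X_k+Y_k)$ for each $k$, and similarly the $1$-jet of the $g$-component and of the $\tilde h,\tilde g$-components recovers the corresponding sums. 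Thus $\mathfrak j_1(\bm{f})=0$ yields the linear relations $X_k+Y_k=0$ for all $k$ (from the $h$-part) together with the relations coming from $g$, $\tilde h$ and $\tilde g$.

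The key step is to extract from $X_k+Y_k=0$ the vanishing of the $a_j$. Writing this out gives, for each $k=1,\ldots,n$,
$$2\sum_{j=1}^d\transp V\overline{(A_j)_k}\big(\Re e(a_j)-\overline{a_j}\big)+\frac{\tilde y_k}{2}+i\frac{y_k}{2}=0.$$
Using $\Re e(a_j)-\overline{a_j}=\Re e(a_j)-(\Re e(a_j)-i\Imm(a_j))=i\Imm(a_j)$, this isolates the imaginary parts $\Imm(a_j)$; combined with the analogous relation coming from the $\tilde g$-jet (which, by the same mechanism as in Lemma \ref{leminj}, produces the factor $\overline{a_j}$ paired against $D_1D_2=\transp\overline{D_2}D_2$), condition $\fff$ forces $a_1=\cdots=a_d=0$ exactly as before, since $D_1D_2$ is positive definite. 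Once the $a_j$ vanish, the surviving relations $X_k+Y_k=0$ reduce to $\frac{\tilde y_k}{2}+i\frac{y_k}{2}=0$, whence $y_k=\tilde y_k=0$; the $g$-component then vanishes identically by (\ref{eqg}).

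The main obstacle I anticipate is bookkeeping rather than conceptual: one must carefully track how the four blocks of the $1$-jet (coming from $h$, $g$, $\tilde h$, $\tilde g$) translate into linear conditions on the parameters, and verify that the combination reproducing the Hermitian Gram matrix $D_1D_2$ genuinely appears, so that condition $\fff$ can again be invoked. This is the same positivity argument that drove Lemma \ref{leminj}; the only real difference is that injectivity of the evaluation $\bm{f}\mapsto f(0)$ is here replaced by injectivity of $\mathfrak j_1$, and one checks that the $1$-jet at $1$ carries at least as much information as the value $f(0)$ for discs of the special affine form $(1-\zeta)(\cdot)$. Therefore the conclusion follows, and $\mathfrak j_1$ is injective on $T_{\bm{f_0}}\mathcal{S}_0(Q)$.
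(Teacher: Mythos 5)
Your overall strategy coincides with the paper's proof: both reuse the explicit parametrization of the kernel obtained in Lemma \ref{leminj} (namely $\tilde g_j=a_j-\overline{a_j}\zeta$, $h=X+Y\zeta$ with $X,Y$ as in (\ref{eqh}), and $g$ given by (\ref{eqg})), impose the vanishing of the $1$-jet at $\zeta=1$ via the observation that $\phi(\zeta)=(1-\zeta)(s+t\zeta)$ satisfies $\phi(1)=0$ and $\phi'(1)=-(s+t)$, and conclude with the positive definiteness of $\transp\overline{D_2}D_2$ guaranteed by condition $\fff$. So the architecture is right, but the wiring of your key step is wrong as written, in two places. First, the $\tilde g$-jet does not produce the Gram matrix: the condition that $(1-\zeta)(a_j-\overline{a_j}\zeta)$ have trivial $1$-jet at $1$ reads $a_j-\overline{a_j}=0$, i.e.\ it gives exactly $a_j\in\R$ and nothing more (each $\tilde g_j$ is scalar in $a_j$, so no matrix can appear there). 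The pairing against $\transp\overline{D_2}D_2$ enters instead through the $g$-jet: from (\ref{eqg}) the relevant sum is $s+t=-4\Re e\left(\overline{B_1}X\right)$, so the trivial $g$-jet gives $\Re e\left(\overline{B_1}X\right)=0$; combined with $X=-Y$ and $\overline{B_1}Y=\transp\overline{D_2}D_2\,(\overline{a_1},\ldots,\overline{a_d})^t$ (with the $a_j$ already real), pairing with $a$ and positive definiteness force $a_1=\cdots=a_d=0$. Second, your claim that $X_k+Y_k=0$ ``isolates the imaginary parts $\Imm(a_j)$'' does not hold by itself: the free parameters $y_k,\tilde y_k$ sit in the same equation, which therefore only expresses them in terms of the $\Imm(a_j)$; the vanishing $\Imm(a_j)=0$ must come from the $\tilde g$-jet first, after which $X+Y=0$ does force $y_k=\tilde y_k=0$. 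With the roles of the $g$- and $\tilde g$-jets swapped back, your argument becomes exactly the paper's; note also that you should conclude explicitly that $\tilde h=0$ (via $u_k=\tilde u_k=0$ once all parameters vanish), which your sketch leaves implicit.
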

\begin{proof}
We  follow strictly the notations introduced in the proofs of Theorem \ref{theodiscs} and Lemma \ref{leminj}. The main step of the proof is to describe the kernel of  
$2\Re e \left[\overline{G(\zeta)}\ \cdot \right]$; this was exactly done in Lemma \ref{leminj}. Let $\bm{f}=(1-\zeta)(h,g,\tilde{h},\tilde{g})$ be an element of the kernel of $2\Re e \left[\overline{G(\zeta)}\ \cdot \right]$ with trivial $1$-jet at $1$. We have 
\begin{eqnarray*}
\begin{cases}
\tilde{g_j}=a_j-\overline{a_j}\zeta, \ a_j \in \C\\
h=X+Y\zeta\\
g(\zeta)= -4\Re e \left(\overline{B_1}X\right)+2\overline{B_1}Y-2\overline{B_1}Y\zeta.\\
\end{cases}
\end{eqnarray*}  
Since $\bm{f}$ has a trivial $1$-jet at $1$, we must have 
\begin{eqnarray*}
\begin{cases}
a_j \in \R\\
X=-Y\\
\Re e \left(\overline{B_1}X\right)=0.\\
\end{cases}
\end{eqnarray*}  
It follows that 
$$\overline{B_1}X=-\overline{B_1}Y=-\transp\overline{D_2}D_2\left(\begin{matrix}
a_1  \\
\vdots  \\
a_d \\
\end{matrix}\right) \in \R$$
and since $\transp\overline{D_2}D_2$ is positive definite, we must have $a_1=\ldots=a_d=0$. This implies directly that $g=\tilde{g}=0$ and $h=0$. Equation (\ref{eqh}) implies that $y_k=\tilde{y_k}=0$ and thus  $u_k=\tilde{u_k}=0$ which finally leads to $\tilde{h}=0$.  
\end{proof}

\section{Jet determination of CR automorphisms} 
Let $k$ be a positive integer.   
Let $M\subset \C^N$ be a $\mathcal{C}^4$ generic  real submanifold and let $p \in M$. We denote by $Aut^k(M,p)$ the set of germs at $p$  of CR automorphisms $F$ of $M$ of class $\mathcal{C}^k$; in particular we have $F(p)=p$ and $F(M)\subset M$.    
\begin{theo}\label{theojet}
Let $M\subset \C^N$ be a $\mathcal{C}^4$ generic  real submanifold. Assume that $M$ is   fully non-degenerate at  $p \in M$. Then elements of $Aut^3(M,p)$ are uniquely determined by their $2$-jet at $p$.     
\end{theo}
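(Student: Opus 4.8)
The plan is to reduce the $2$-jet determination of CR automorphisms to the $1$-jet determination of the associated lifts of stationary discs, using the invariance of the family of stationary discs under CR automorphisms together with the geometric properties established in Section~3. Let $F \in Aut^3(M,p)$ with $p=0$. The first step is to recall that $F$ maps stationary discs attached to $M$ to (reparametrizations of) stationary discs; more precisely, if $\bm{f}=(f,\tilde{f})$ is a lift of a stationary disc for $M$, then $F\circ f$ extends holomorphically to $\Delta$ (as recalled after the definition of stationary discs), and the lift $\bm{f}$ pushes forward to a lift $\widehat{F}(\bm{f})$ of a stationary disc for $M$ via the natural action of $F$ on the cotangent bundle $T^*\C^N$. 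The map $\widehat{F}$ acts on the family $\mathcal{S}_0(M)$ constructed in Theorem~\ref{theodiscscons}, possibly after composing with a Möbius reparametrization of $\Delta$ fixing $\zeta=1$ to restore the normalization $f(1)=0$ and $\tilde{g}(1)=(c_1/2,\ldots,c_d/2)$.

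The key reduction is that the $1$-jet at $\zeta=1$ of the lifted disc $\widehat{F}(\bm{f})$ is completely determined by the $2$-jet of $F$ at $p$. Indeed, the action of $F$ on $f$ involves $F$ itself and its first derivative $dF(0)$, so the $1$-jet of $F\circ f$ at $\zeta=1$ depends on the $1$-jet of $F$ at $0$; the lift component $\tilde{f}$ transforms via the inverse-transpose of the complex Jacobian of $F$, whose derivative brings in the second-order derivatives of $F$ at $0$. Thus the $1$-jet of $\widehat{F}(\bm{f})$ at $\zeta=1$ is expressible through $j^2_0 F$. The plan is therefore: given two automorphisms $F,G \in Aut^3(M,p)$ with $j^2_0 F = j^2_0 G$, to show that $\widehat{F}$ and $\widehat{G}$ agree on the family $\mathcal{S}_0(M)$, and then to extract $F=G$ from this coincidence.

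To carry out the extraction I would use Proposition~\ref{propcenteropen} and Proposition~\ref{propjetdiscs} as follows. For each point $q$ in the open set $O \subset \C^{n+d}$ provided by Proposition~\ref{propcenteropen}, there is a unique lift $\bm{f}_q \in \mathcal{S}_0(M)$ with $f_q(0)=q$ (by the Remark following Proposition~\ref{propcenteropen}). Since $j^2_0 F = j^2_0 G$, the pushed-forward discs $\widehat{F}(\bm{f}_q)$ and $\widehat{G}(\bm{f}_q)$ have the same $1$-jet at $\zeta=1$ after the common reparametrization; both belong to $\mathcal{S}_0(M)$, so Proposition~\ref{propjetdiscs} forces $\widehat{F}(\bm{f}_q)=\widehat{G}(\bm{f}_q)$ as discs, and in particular $F(q)=\widehat{F}(\bm{f}_q)(0)=\widehat{G}(\bm{f}_q)(0)=G(q)$. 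As $q$ ranges over the open set $O$, the holomorphic maps $F$ and $G$ coincide on $O$, hence $F=G$ near $p$ by the identity principle, which is the desired $2$-jet determination.

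The main obstacle I expect is the careful bookkeeping of how the $2$-jet of $F$ controls the $1$-jet of the lifted disc at $\zeta=1$, and conversely verifying that exactly the second-order (and no higher) jet data of $F$ enters. This requires making precise the action $\widehat{F}$ of a CR automorphism on lifts in $\mathcal{S}(M)$ and checking that it preserves the normalized subfamily $\mathcal{S}_0$ after reparametrization; one must confirm that the reparametrization fixing $\zeta=1$ is itself determined by $j^1_0 F$ and does not introduce uncontrolled higher-order dependence. A secondary technical point is ensuring that $F\circ\bm{f}_q$ stays within the $\varepsilon$-neighborhood of $\bm{f}_0$ in which Propositions~\ref{propcenteropen} and \ref{propjetdiscs} apply, which follows by shrinking the neighborhoods and using continuity of the construction, but must be stated cleanly.
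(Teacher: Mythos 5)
Your high-level skeleton matches the paper's: push lifts of stationary discs forward under the automorphism, observe that the $1$-jet at $\zeta=1$ of the pushed-forward lift is governed by the $2$-jet of the map at $p$, invoke Proposition~\ref{propjetdiscs} to identify discs, and use Proposition~\ref{propcenteropen} to conclude on an open set of centers. But there is a genuine gap: you apply Theorem~\ref{theodiscscons} and Propositions~\ref{propcenteropen} and~\ref{propjetdiscs} directly to $\mathcal{S}_0(M)$, whereas all three results only hold for defining functions $r$ lying in a small $\mathcal{C}^4$-neighborhood $U$ of the model quadric $\rho$. A general fully non-degenerate $M$ has $r=\rho+O(3)$, which is \emph{not} a small perturbation of $\rho$ on a fixed ball, so the family $\mathcal{S}_0(M)$, the open set $O$, and the $1$-jet injectivity you quote are simply not available for $M$ itself. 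The paper's proof hinges on the anisotropic dilations $\Lambda_t(z,w)=(tz,t^2w)$: setting $r_t=\frac{1}{t^2}r\circ\Lambda_t$ one gets $r_t\to\rho$ in $\mathcal{C}^4$ (Lemma~\ref{lemdil}, part i.), so for small $t$ the whole disc machinery applies to $M_t=\Lambda_t^{-1}(M)$, and the conclusion is transported back through $F_t=\Lambda_t^{-1}\circ F\circ\Lambda_t$. This rescaling is a substantive idea your proposal lacks, not bookkeeping.

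The same omission undermines what you dismiss as ``a secondary technical point.'' After reducing to a single map $F$ with trivial $2$-jet (the paper does this instead of your two-map comparison, which avoids your Möbius-reparametrization worry altogether: since $F(0)=0$, $dF(0)=\mathrm{id}$ and $f(1)=0$, the normalizations $f(1)=0$, $\tilde{g}(1)=c/2$ are automatically preserved, and Lemma~\ref{lemdil} even gives $(F_t)_*\bm{f}\in\mathcal{A}$), one still must ensure that $(F_t)_*\bm{f}$ lies in the $\varepsilon$-ball around $\bm{f_0}$ where Proposition~\ref{propjetdiscs} applies. For fixed $M$ and fixed $F\neq\mathrm{id}$ there is no smallness of $(F)_*\bm{f}-\bm{f}$ to exploit, and you cannot shrink the discs, since $\varepsilon$ and the family are anchored at the fixed nontrivial disc $\bm{f_0}$; ``shrinking neighborhoods and continuity'' does not produce the estimate. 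The needed smallness again comes only from the dilations: part ii.\ of Lemma~\ref{lemdil} gives $\|(F_t)_*\bm{f}-\bm{f}\|_{\mathcal{A}^{1,\alpha}_0}<tK(\max\{\|h\|_{1,\alpha},\|g\|_{1,\alpha},\|\tilde{h}\|_{1,\alpha},\|\tilde{g}\|_{1,\alpha}\})^3$, where the cubic gain reflects precisely the vanishing of the $2$-jet of $F$, and letting $t\to 0$ forces $(F_t)_*\bm{f}$ into the $\varepsilon$-ball. A last, smaller, inaccuracy: your closing appeal to the identity principle treats $F$ and $G$ as holomorphic, but elements of $Aut^3(M,p)$ are only $\mathcal{C}^3$ CR automorphisms; the value $F(q)$ at an interior center $q\in O$ means the holomorphic extension of $F\circ f$ evaluated at $0$, and the correct conclusion is drawn, as in the paper, from $(F_t)_*\bm{f}=\bm{f}$ giving $F_t\circ f=f$ and hence $F_t(q)=q$ on the open set $O$ filled by the centers.
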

In order to prove Theorem \ref{theojet}, we need a technical lemma. 
Let $M\subset \C^N$ be a $\mathcal{C}^4$ generic  real submanifold  given by 
 $\{r=0\}$ (\ref{eqred}), and let $Q$ be its associated quadric part  defined by  $\{\rho=0\}$ (\ref{eqquadric}).  We recall the following anisotropic dilation $\Lambda_t: \C^{n+d} \to \C^{n+d}$ given by 
$$\Lambda_t(z,w)=(tz,t^2w).$$ We set $M_t=\Lambda_t^{-1}(M)$, $r_t=\frac{1}{t^2}r\circ \Lambda_t$ and $F_t=\Lambda_t^{-1}\circ F \circ \Lambda_t$, where $F \in Aut^3(M,p)$. We also recall that for an analytic disc ${\bm f}=(f,\tilde{f}) \in  \left(\mathcal{A}^{1,\alpha}\right)^{2n+2d}$ where $0<\alpha<1$, we have  
$$(F_t)_*{\bm f}(\zeta)=\left(F_t\circ f(\zeta), \tilde{f}(\zeta)(d_{f(\zeta)}F_t)^{-1}\right)$$
for $\zeta \in \Delta$,  and $F_t\circ f(\zeta)$ is well defined thanks to Proposition 6.2.2 \cite{ber}.   
The following lemma follows from  the same arguments used in  Section 5.1 \cite{be-bl} or Section 5.1 \cite{be-de1}. 
\begin{lemma}\label{lemdil}
We have the following:
\begin{enumerate}[i.] 
\item Let $U$ be a neighborhood of $\rho$ in $\mathcal{C}^4$ topology. Then for $t$ small enough, $r_t \in U$.      
\item Let $F \in Aut^3(M,p)$ with a trivial $2$-jet. There exists a positive constant $K$ such that for all ${\bm f}=((1-\zeta)h,(1-\zeta)g,(1-\zeta)\tilde{h},(1-\zeta)\tilde{g}+c\zeta) 
 \in  \mathcal{A}$ we have 
$(F_t)_*{\bm f} \in \mathcal{A}$ and 
$$\|(F_t)_*{\bm f}-\bm{f}\|_{\mathcal A^{1,\alpha}_{0}}<tK \left(\max \{\|h\|_{1,\alpha},\|g\|_{1,\alpha},\|\tilde{h}\|_{1,\alpha},\|\tilde{g}\|_{1,\alpha}\}\right)^3.$$
\end{enumerate}
\end{lemma}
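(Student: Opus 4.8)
The plan is to treat the two assertions separately, reducing everything to the explicit anisotropic scaling $\Lambda_t(z,w)=(tz,t^2w)$ and then tracking orders in $t$ and in the disc; the whole argument parallels Section 5.1 of \cite{be-bl} and Section 5.1 of \cite{be-de1}, which I would cite for the routine estimates.

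For (i), I would start from the normalized defining function and split it as $r=\rho+\phi$, where $\rho$ is the quadric part (\ref{eqquadric}) and $\phi=O(3)$ in the weighted sense ($z$ of weight one, $\Imm w$ of weight two). Since $\rho$ is weighted homogeneous of degree two, $\rho\circ\Lambda_t=t^2\rho$, so that
\[
r_t=\frac{1}{t^2}\,r\circ\Lambda_t=\rho+\frac{1}{t^2}\,\phi\circ\Lambda_t,
\]
and the claim reduces to showing $\frac{1}{t^2}\,\phi\circ\Lambda_t\to 0$ in $\mathcal{C}^4(\B)$ as $t\to0$. To see this I would Taylor expand $\phi$ isotropically to order four (legitimate since $r\in\mathcal C^4$), writing $\phi=P+\phi_{\mathrm{rem}}$ with $P$ a polynomial and $\phi_{\mathrm{rem}}$ the $\mathcal C^4$ remainder. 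Every monomial of $P$ has weighted degree $\mu\ge 3$, hence transforms under $\frac1{t^2}(\cdot)\circ\Lambda_t$ by the factor $t^{\mu-2}\le t$, so $\frac1{t^2}\,P\circ\Lambda_t=O(t)$ in $\mathcal C^4(\B)$; for $\phi_{\mathrm{rem}}$, the chain rule together with $|\Lambda_tX|\lesssim t$ on $\B$ and the vanishing up to order four of the derivatives of $\phi_{\mathrm{rem}}$ shows that each of its derivatives of order at most four tends to $0$. This is exactly where the $\mathcal C^4$ hypothesis is used: the weight-two variable $w$ induces a loss of two orders under $\Lambda_t^{-1}$, and four derivatives are precisely what compensate it. Thus $r_t\to\rho$ in $\mathcal C^4(\B)$, and $r_t\in U$ for $t$ small.

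For (ii), set $G=F-\mathrm{id}$; the trivial $2$-jet hypothesis gives $G(0)=0$, $d_0G=0$, $d_0^2G=0$, whence $G=O(|X|^3)$, $dG=O(|X|^2)$, $d^2G=O(|X|)$ with $\mathcal C^3$ control by Taylor's theorem. Since $\Lambda_t$ is the linear diagonal map $\mathrm{diag}(tI_n,t^2I_d)$, a direct computation gives on $\partial\Delta$
\[
F_t\circ f-f=\Lambda_t^{-1}\,G(\Lambda_t f),\qquad d_{f}F_t-I=\Lambda_t^{-1}\,(dG)(\Lambda_t f)\,\Lambda_t .
\]
For $\bm{f}\in\mathcal A$ one has $f=((1-\zeta)h,(1-\zeta)g)$, so $\Lambda_t f=(1-\zeta)(th,t^2g)$ has $\mathcal C^{1,\alpha}$-norm $\lesssim t\delta$, where $\delta:=\max\{\|h\|_{1,\alpha},\|g\|_{1,\alpha},\|\tilde h\|_{1,\alpha},\|\tilde g\|_{1,\alpha}\}$. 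Because $G$ vanishes to order three, $\|G(\Lambda_t f)\|_{1,\alpha}\lesssim\|\Lambda_t f\|_{1,\alpha}^3\lesssim (t\delta)^3$; applying $\Lambda_t^{-1}$ divides the $w$-block by $t^2$, the worst factor, so $\|F_t\circ f-f\|_{1,\alpha}\lesssim t\delta^3$. Conjugating $dG=O((t\delta)^2)$ by $\Lambda_t$, the worst block is the $(w,z)$ one, amplified by $t^{-1}$, so $d_fF_t-I=O(t\delta^2)$, and the cotangent part $\tilde f\big[(d_fF_t)^{-1}-I\big]$ has norm $\lesssim\delta\cdot t\delta^2=t\delta^3$.

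It remains to check membership in $\mathcal A$ and to pass to the $\mathcal A^{1,\alpha}_0$-norm. At $\zeta=1$ one has $f(1)=0$ and $d_0F_t=\Lambda_t^{-1}(d_0F)\Lambda_t=I$, so $(F_t)_*\bm{f}(1)=\bm{f}(1)=(0,0,0,c)$; holomorphy of $(F_t)_*\bm{f}$ follows from the disc-extension property (Proposition 6.2.2 in \cite{ber}). Hence the first $2n+d$ components of $(F_t)_*\bm{f}$ vanish at $\zeta=1$ and the last $d$ equal $c$ there, so $(F_t)_*\bm{f}\in\mathcal A$ and $(F_t)_*\bm{f}-\bm{f}\in\mathcal A^{1,\alpha}_0$. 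I expect the decisive obstacle to be the passage to the $\mathcal A^{1,\alpha}_0$-norm, i.e. controlling the quotient by $(1-\zeta)$: here I would use that $G=O(|\cdot|^3)$ composed with $\Lambda_t f=(1-\zeta)(th,t^2g)$ forces $(F_t)_*\bm{f}-\bm{f}$ to vanish to order three at $\zeta=1$, so dividing once by $(1-\zeta)$ costs nothing in $\mathcal C^{1,\alpha}$; equivalently, carrying the explicit $(1-\zeta)$ factors through the expansion of $G$ keeps every estimate in the $\mathcal A^{1,\alpha}_0$-norm and yields $\|(F_t)_*\bm{f}-\bm{f}\|_{\mathcal A^{1,\alpha}_0}<tK\delta^3$. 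The anisotropic bookkeeping in the cotangent factor $(d_fF_t)^{-1}$ and this quotient estimate are the only delicate points, and both are handled exactly as in \cite{be-bl,be-de1}.
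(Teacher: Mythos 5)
The paper gives no in-house proof of this lemma: it is stated with a pointer to Section 5.1 of \cite{be-bl} and Section 5.1 of \cite{be-de1}, and your reconstruction follows exactly that scaling argument. Your part (i) — weighted homogeneity $\rho\circ\Lambda_t=t^2\rho$, fourth-order isotropic Taylor splitting $\phi=P+\phi_{\mathrm{rem}}$, the factor $t^{\mu-2}\le t$ for weighted degree $\mu\ge3$, and the observation that four derivatives are exactly what compensates the weight-two loss in $w$ — is the intended argument and correctly locates the role of the $\mathcal C^4$ hypothesis. In part (ii), the identities $F_t\circ f-f=\Lambda_t^{-1}G(\Lambda_t f)$ and $d_fF_t-I=\Lambda_t^{-1}(dG)(\Lambda_t f)\Lambda_t$, the block bookkeeping (worst amplification $t^{-1}$ in the $(w,z)$ block), and the use of the order-three vanishing of $G$ along $\Lambda_t f=(1-\zeta)(th,t^2g)$ to make the division by $(1-\zeta)$ harmless in $\mathcal C^{1,\alpha}$ are all the right mechanisms.

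There is, however, one step that is wrong as written: the estimate $\|\tilde f\|_{1,\alpha}\lesssim\delta$ for the cotangent factor. A disc in $\mathcal A$ has last component $(1-\zeta)\tilde g+\frac{\zeta}{2}c$ with $c$ a \emph{fixed nonzero} vector, so $\|\tilde f\|_{1,\alpha}\gtrsim|c|$ independently of $\delta$. Multiplying this affine part by $(d_fF_t)^{-1}-I=O(t\delta^2)$ (the $t^{-1}$-amplified block $\Lambda_t^{-1}(dG)_{wz}\Lambda_t$ feeds precisely the $\tilde z$-components) produces a term of size $t|c|\delta^2$, so your chain of estimates actually proves $\|(F_t)_*\bm{f}-\bm{f}\|_{\mathcal A^{1,\alpha}_{0}}\le tK\max(\delta^2,\delta^3)$, and the cubic bound fails as $\delta\to0$; your final line $\lesssim\delta\cdot t\delta^2$ silently drops the $c$-term. (The lemma's literal statement is delicate in the same regime; in the paper's application this is immaterial, since the discs range over an $\varepsilon$-ball around $\bm{f_0}$, on which $\delta$ is bounded below by a positive constant depending on $V$ and $c$, so the $\delta^2$ term is absorbed into $K$ — but you should either say this or include $|c|$ in the maximum.) Two smaller completions: membership $(F_t)_*\bm{f}\in\mathcal A$ requires the quotient by $(1-\zeta)$ to be $\mathcal C^{1,\alpha}$ for the cotangent component as well, which follows from $dG(\Lambda_t f)=O(|1-\zeta|^2)$ rather than from the cubic vanishing you invoke for the base part; and holomorphic extendability must be applied not only to $F_t\circ f$ but to the entries of $dF_t\circ f$ (CR functions on $M_t$ composed with the attached disc, again via Proposition 6.2.2 of \cite{ber}) before inverting near the identity.
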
   

\begin{proof}[Proof of Theorem \ref{theojet}]
Let $M\subset \C^N$ be a $\mathcal{C}^4$ generic  real submanifold, fully non-degenerate at at $p\in M$. We may assume that $p=0$ and that $M$ is locally given by $\{r=0\}$ (\ref{eqred}).
Denote by $Q$ the associated quadric part of $M$ defined by $\{\rho=0\}$ (\ref{eqquadric}). 
Let $F \in Aut^3(M,0)$ with a trivial $2$-jet at $0$. We wish to show that $F$ is the identity.

Since $M$ satisfies $\fff$ at $0$, there exists $V\in \C^{n}$ such that ${\rm span}_{\C}\{A_1V,\ldots,A_dV\}$ is of dimension $d$. Consider an initial lift of stationary disc $\bm{f_0}$ of the form 
 $$\bm{f_0}=\left((1-\zeta)V,2(1-\zeta)\transp \overline{V}A_1V,\ldots,2(1-\zeta)\transp \overline{V}A_dV,(1-\zeta)\transp \overline{V}A,\frac{\zeta}{2}c\right)$$
 where  $c_1,\ldots,c_d$ are chosen such that $\sum_{j=1}^d c_jA_j$ is invertible.  Denote by $U$ the  neighborhood of $\rho$ in $\mathcal{C}^4$ topology obtained in  Theorem \ref{theodiscscons}. According to Lemma \ref{lemdil}, for $t$ small enough, the defining functions $\displaystyle r_t=\frac{1}{t^2}r\circ
 \Lambda_t \in U$. Reducing $U$, and therefore $t$, if necessary, Proposition \ref{propcenteropen} provides an open set $O \subset \C^{n+d}$ such that 
 $$O\subset \{f(0) \ \ | \ {\bm f} \in \mathcal{S}_0(\{r_t=0\}) , \ 
\|\bm{f}-\bm{f_0}\|_{\mathcal A^{1,\alpha}_{0}}<\varepsilon/2 \}.$$We will show that $F_t=\Lambda_t^{-1}\circ F \circ \Lambda$ is equal to the identity on the open set $O$. Let $q \in O$ and let ${\bm f}$ be the (unique) lift of stationary disc in  $\mathcal{S}_0(\{r_t=0\})$ with $\|\bm{f}-\bm{f_0}\|_{\mathcal A^{1,\alpha}_{0}}<\varepsilon/2$ and such that $f(0)=q$. By invariance and since $F_t$ has a trivial $2$-jet, we have  $(F_t)_*{\bm f} \in  \mathcal{S}_0(\{r_t=0\})$ and by Lemma \ref{lemdil} we have $\|(F_t)_*{\bm f}-\bm{f_0}\|_{\mathcal A^{1,\alpha}_{0}}<\varepsilon$ for $t$ small enough. Moreover, the discs $(F_t)_*{\bm f}$ and ${\bm f}$ have the same $1$-jet. By Proposition \ref{propjetdiscs} we have $(F_t)_*{\bm f}={\bm f}$ and therefore $F_t \circ f(0)=f(0)$, that is $F_t (q)=q$. This achieves the proof of Theorem \ref{theojet}.  
\end{proof}

\ \ \

\noindent  {\it Acknowledgments.}  This work was done when authors were visiting the Center for Advanced Mathematical Sciences of the American University of Beirut, the Department of the Mathematics of the University of Fribourg and of the University of Lille. We are thankful for the hospitality and the support of these institutions.

\vskip 1cm
{\small
\noindent Florian Bertrand\\
Department of Mathematics, Fellow at the  Center for Advanced Mathematical Sciences (CAMS)\\
American University of Beirut, Beirut, Lebanon\\
{\sl E-mail address}: fb31@aub.edu.lb\\

\noindent L\'ea Blanc-Centi \\
Laboratoire Paul Painlev\'e\\
Universit\'e de Lille, 59655 Villeneuve d'Ascq C\'edex, France\\
{\sl E-mail address}:  lea.blanc-centi@univ-lille.fr\\

\noindent Francine Meylan \\
Department of Mathematics\\
University of Fribourg, CH 1700 Perolles, Fribourg\\
{\sl E-mail address}: francine.meylan@unifr.ch\\
} 


\begin{thebibliography}{11111} 

\bibitem{BER1} M.S. Baoudendi, P. Ebenfelt, L.P. Rothschild, {\it CR automorphisms of real analytic CR manifolds in complex space}, Comm. Anal. Geom. {\bf 6}  (1998), 291-315.
  
\bibitem{ber}M.S. Baouendi, P. Ebenfelt, L.P. Rothschild, {\it Real submanifolds in complex space and their mappings}, 
Princeton Mathematical Series, {\bf 47}. Princeton University Press, Princeton, NJ, 1999. xii+404 pp.





\bibitem{BER2} M.S. Baouendi, P. Ebenfelt, L.P. Rothschild,
 \textit{Local geometric properties of 
real submanifolds in complex space}, 
Bull. Amer. Math. Soc. (N.S.) \textbf{37}
  (2000),   309-336.
  

\bibitem{ba-ja-tr} M.S. Baouendi, H. Jacobowitz, F. Treves, {\it On the analyticity of CR mappings}, 
Ann. of Math. {\bf 122} (1985) 365-400.

\bibitem{BMR} M.S. Baouendi, N. Mir, L.P. Rothschild,    
 \textit{Reflection Ideals and mappings between generic submanifolds in complex space}, 
J. Geom. Anal.  \textbf{12} 
  (2002),   543-580.
  
\bibitem{be1} V.K. Beloshapka, {\it Finite dimensionality of the group of automorphisms of a real-analytic surface},  Math. USSR Izvestiya {\bf 32} (1989), 239-242.

\bibitem{be2} V.K. Beloshapka, {\it A uniqueness theorem for automorphisms of a nondegenerate surface in a complex space (Russian)}, Mat. Zametki {\bf 47} (1990), 17-22; transl. {\it Math. Notes} {\bf 47} (1990) 239-243.




\bibitem{be-bl} F. Bertrand, L. Blanc-Centi, {\it Stationary holomorphic discs and finite jet determination problems},  
Math. Ann. {\bf 358} (2014), 477-509.

\bibitem{be-de1} F. Bertrand, G. Della Sala, {\it Stationary discs for smooth hypersurfaces of finite type and finite jet determination},  J. Geom. Anal. {\bf 25} (2015),
 2516-2545.

\bibitem{be-de2} F. Bertrand, G. Della Sala, {\it Riemann-Hilbert problems with constraints}, preprint.


\bibitem{be-de-la} F. Bertrand, G. Della Sala, B. Lamel, 
{\it  Jet determination of smooth CR automorphisms and generalized stationary discs}, preprint. 


\bibitem{bl} L. Blanc-Centi, {\it Stationary discs glued to a Levi non-degenerate hypersurface}, Trans. Amer. Math. Soc. {\bf 361} (2009), 3223-3239. 

\bibitem{bl-me1} L. Blanc-Centi, F. Meylan, {\it On nondegeneracy conditions in higher codimension}, preprint, arXiv:1711.11481.

\bibitem{bl-me2} L. Blanc-Centi, F. Meylan, {\it Chern-Moser operators and weighted jet determination problems in higher codimension}, preprint, arXiv:1712.00295.

\bibitem{bo} A. Boggess, {\it CR Manifolds and the Tangential Cauchy-Riemann Complex.} CRC Press, 1991.

 \bibitem{ch-mo} S.S. Chern, J.K. Moser, {\it Real hypersurfaces in complex manifolds},  Acta math. {\bf 133} (1975), 219-271.

\bibitem{ch} E.M. Chirka, {\it Regularity of the boundaries of analytic sets}, Mat. Sb. {\bf 45} (1983), 291-336.


\bibitem{eb} P. Ebenfelt, {\it Finite jet determination of holomorphic mappings at the boundary}, 
Asian J. Math. {\bf 5} (2001), 637-662.

\bibitem{eb-la}P. Ebenfelt, B. Lamel, {\it Finite jet determination of CR embeddings}, 
J. Geom. Anal. {\bf 14} (2004), 241-265.




\bibitem{fo} F. Forstneri\v{c}, {\it Analytic disks with boundaries in a maximal real submanifold of $\C^2$}, Ann. Inst. Fourier {\bf 37} (1987), 1-44.

\bibitem{gl1}J. Globevnik, {\it Perturbation by analytic discs along maximal real submanifolds of $\C^N$},
 Math. Z. {\bf 217} (1994), 287-316.

\bibitem{gl2}J. Globevnik, {\it Perturbing analytic discs attached to maximal real submanifolds of $\C^N$}, 
Indag. Math.  {\bf 7} (1996), 37-46.


\bibitem{hi-ta}C.D. Hill, G. Taiani, {\it Families of analytic discs in $\C^n$ with boundaries on a prescribed CR submanifold},
 Ann. Scuola Norm. Sup. Pisa Cl. Sci. (4) {\bf 5} (1978), 327-380.
 
 \bibitem{hu} X. Huang, {\it A non-degeneracy property of extremal mappings and iterates of holomorphic 
self-mappings}, Ann. Scuola Norm. Sup. Pisa Cl. Sci. (4) {\bf 21} (1994), 399-419.

 
\bibitem{ju}  R. Juhlin, {\it Determination of formal CR mappings by a finite jet}, Adv. Math. {\bf 222} (2009), 1611-"1648.
 
\bibitem{ju-la} R. Juhlin,  B. Lamel, {\it Automorphism groups of minimal real-analytic CR manifolds},  J. Eur. Math. Soc. (JEMS) {\bf 15} (2013), 509-537.

\bibitem{ki-za} S.-Y. Kim,  D. Zaitsev, {\it Equivalence and embedding problems for CR-structures of any 
codimension}, Topology {\bf 44} (2005), 557-584.


 
\bibitem{KMZ1} M. Kol\'a\v r, F.  Meylan, D. Zaitsev, {\it Chern-Moser operators and polynomial models in CR geometry},  Adv. Math.  \textbf{263} (2014), 321-356.

\bibitem{la-mi} B. Lamel, N. Mir, {\it Finite jet determination of CR mappings}, Adv. Math. {\bf 216} (2007), 153-177.
  
\bibitem{le} L. Lempert, {\it La m\'etrique de Kobayashi et la repr\'esentation des domaines sur la boule}, Bull. Soc. Math. France 
{\bf 109} (1981), 427-474.

\bibitem{pa} M.-Y.  Pang, {\it Smoothness of the Kobayashi metric of nonconvex domains}, Internat. J. Math. {\bf 4} (1993), 953-987.

\bibitem{Po} H. Poincar\'e, {\it Les fonctions analytique de
deux variables et la repr\'esentation conforme}, Rend. Circ. Mat.
Palermo \textbf{23} (1907), 185-220.

\bibitem{sc-tu} A. Scalari, A. Tumanov,  {\it Extremal discs and analytic continuation of product CR maps}, Michigan Math. J. {\bf 55} (2007), 25-33.

\bibitem{su-tu} A. Sukhov, A. Tumanov, {\it Stationary discs and geometry of CR manifolds of codimension two}, Internat. J. Math. {\bf 12} (2001), 877-890.

\bibitem{tu} A. Tumanov, {\it Extremal discs and the regularity of CR mappings in higher codimension}, Amer. J. Math. 
{\bf 123} (2001), 445-473.


\bibitem{ve}N.P. Vekua, {\it Systems of singular integral equations}, Noordhoff, Groningen (1967) 216 pp.


\bibitem{we}S. Webster, {\it On the reflection principle in several complex variables}, 
Proc. Amer. Math. Soc. {\bf 71} (1978), 26-28.

\bibitem{za} D. Zaitsev, {\it Germs of local automorphisms of real analytic CR structures and analytic dependence on the $k$-jets}, 
Math. Res. Lett. {\bf 4} (1997), 1-20.

\bibitem{za1} D. Zaitsev, {\it Unique determination of local CR-maps by their jets: a survey}, 
Rend. Mat. Acc. Lincei {\bf 13} (2002), 135-145.

\end{thebibliography}
\end{document}